\newcolumntype{P}[1]{>{\centering\arraybackslash}p{#1}}
\newcolumntype{M}[1]{>{\centering\arraybackslash}m{#1}}
\normalfont\fontfamily{ptm}\fontsize{11}{11}\bfseries}{\thesection}{1em}{}
\normalfont\fontfamily{ptm}\fontsize{10}{11}\bfseries}{\thesubsection}{1em}{}
\normalfont\fontfamily{ptm}\fontsize{10}{11}\selectfont}{\thesubsubsection}{1em}{}
\newtheorem{theorem}{Theorem}[section]
\newtheorem{corollary}[theorem]{Corollary}
\newtheorem{lemma}[theorem]{Lemma}
\newtheorem{proposition}[theorem]{Proposition}
\newtheorem{definition}[theorem]{Definition}
\newtheorem{remark}[theorem]{Remark}
\DeclarePairedDelimiter\floor{\lfloor}{\rfloor}
\providecommand{\keywords}[1]
{
  \small	
  \quad \quad \textbf{\textit{Keywords --}} #1
}
\newcommand{\bfs}[1]{\mbox{\boldmath$#1$}}
\title{\large An Exact Cutting Plane Method for $k$-submodular Function Maximization}
\author{ \small Qimeng Yu \quad Simge K\"u\c{c}\"ukyavuz\vspace{0.2cm}  \\ \small Department of Industrial Engineering and Management Sciences \\ \small Northwestern University, Evanston, IL, USA \\ \small \{kim.yu@u.northwestern.edu, simge@northwestern.edu\}}
\date{\small \today} 
\begin{document}
\maketitle

\begin{abstract}
\noindent  A natural and important generalization of submodularity---$k$-submodularity---applies to set functions with $k$ arguments and appears in a broad range of applications, such as infrastructure design, machine learning, and healthcare. In this paper, we study maximization problems with $k$-submodular objective functions. We propose valid linear inequalities, namely the $k$-submodular inequalities, for the hypograph of any $k$-submodular function. This class of inequalities serves as a novel generalization of the well-known submodular inequalities. We show that maximizing a $k$-submodular function is equivalent to solving a mixed-integer linear program with exponentially many $k$-submodular inequalities. Using this representation in a delayed constraint generation framework, we design the first exact algorithm, that is not a complete enumeration method, to solve general $k$-submodular maximization problems. Our computational experiments on the {multi-type sensor placement problems} demonstrate the {efficiency} of our algorithm in constrained nonlinear $k$-submodular maximization problems for which no alternative compact mixed-integer linear formulations are available. The computational experiments show that our algorithm significantly outperforms the only available  exact solution method---exhaustive search. Problems that would require over 13 years to solve by exhaustive search can be solved within ten minutes using our method. 
\end{abstract}
\keywords{$k$-submodular maximization; cutting plane;  multi-type sensor placement}

\section{Introduction}
\label{sect:intro}
Submodularity is an important concept in integer and combinatorial optimization. Several functions of great theoretical interest in combinatorial optimization are submodular, such as the set covering function and the graph cut function. Submodularity also arises in numerous practical applications, including the influence maximization problem \citep{kempe2015maximizing}, the sensor placement problem \citep{krause2008efficient}, and the hub location problem \citep{contreras2014hub}. Submodularity is shown to be the discrete analogue of convexity \citep{lovasz1983submodular}, and the unconstrained submodular \emph{minimization} problem is polynomially solvable \citep{iwata2001combinatorial,schrijver2000combinatorial,orlin2009faster, lee2015faster}. However, submodular minimization with simple constraints, such as cardinality constraints, is generally NP-hard \citep{svitkina2011submodular}. Submodular \emph{maximization} is known to be NP-hard even in the unconstrained case.  \citet{nemhauser1978analysis} prove that the greedy method for maximizing a monotone submodular function subject to a cardinality constraint is a $(1-1/e)$-approximation algorithm.\\

In addition to the approximation algorithms, polyhedral approaches have been popular in submodular optimization research. \citet{edmonds1970submodular} gives linear inequalities which fully describe the convex hull of the epigraph for a submodular function (see also \cite{edmonds2003submodular}). These inequalities are referred to as extended polymatroid inequalities, as they are closely related to a structure called extended polymatroid. \citet{atamturk2008polymatroids} establish a polarity result analogous to the relationship between extended polymatroids and extended polymatroid inequalities for non-submodular functions. With this observation, \citet{atamturk2021submodular} present an alternative proof for the convex hull description of the epigraphs for submodular functions. \citet{nemhauser1988integer}  give an exact method for submodular maximization from a polyhedral perspective. They show that maximizing any submodular function is equivalent to solving a mixed-integer linear program with exponentially many linear inequalities, referred to as the submodular inequalities. \citet{atamturk2021submodular} provide valid inequalities for general set functions by exploiting their submodular-supermodular decomposition. Moreover, the polyhedral approach has received renewed interest, both in terms of strengthening extended polymatroid inequalities \citep{yu2017polyhedral,yu2021strong} and submodular inequalities \citep{Ahmed2011,Yu2017,shi2020sequence}, as well as extending their use to stochastic settings \citep{Wu2017,Wu2018maxinf,Wu2020, zhang2018ambiguous, xie2019distributionally}. In addition, submodular properties in mixed-binary convex quadratic and conic optimization problems are discovered and exploited in \citep{atamturk2008polymatroids,gomez2018submodularity, atamturk2020submodularity, atamturk2020supermodularity, Kilinc-Karzan2020-conicbinary}. \\

Submodularity can be generalized to functions with $k$ set arguments for any positive integer $k$, resulting in the concept called \emph{$k$-submodularity}. This term is introduced by \citet{huber2012towards}, and it encompasses submodularity and \emph{bisubmodularity} \citep{chandrasekaran1988pseudomatroids,qi1988directed} as special cases when $k=1$ and $k=2$, respectively.  Researchers have studied \emph{$k$-submodular minimization}, where $k\geq 2$, using various approaches. \citet{qi1988directed} proves an analogue of Lov\'asz extension for bisubmodular functions, implying that this class of functions can be minimized in polynomial time using the ellipsoid method. Subsequently, weakly and strongly polynomial-time algorithms are proposed for unconstrained bisubmodular minimization \citep{fujishige2005bisubmodular,mccormick2010strongly}. \citet{yu2020polyhedral} take a polyhedral approach and present a complete linear convex hull description for the epigraph of any bisubmodular function. Based on this polyhedral characterization, the authors propose an effective cutting plane algorithm to solve constrained bisubmodular minimization. \citet{huber2012towards} generalizes the Min-Max Theorem for submodular and bisubmodular minimization to the $k$-submodular case with $k\geq 3$. Whether $k$-submodular functions can be minimized in polynomial-time when $k\geq 3$ is still an open problem.\\

The \emph{$k$-submodular maximization} problem---a generalization of the NP-hard submodular maximization problem---is also NP-hard. Extensive research has been devoted to developing approximation algorithms and proving their guarantees. For example, \citet{singh2012bisubmodular} give a constant-factor approximation algorithm for a class of bisubmodular functions. {The authors refer to bisubmodularity that we consider in this paper as directed bisubmodularity. They show that a bisubmodular function can be embedded into a submodular function over an extended ground set,  a set containing two copies of each element in the original ground set. For each subset of the extended ground set that contains both copies of an element, the submodular function value can be recursively obtained by solving discrete optimization problems with exponentially sized decision spaces. Such construction is computationally expensive but theoretically interesting.} \citet{iwata2013bisubmodular, ward2014maximizing} independently show that a randomized greedy algorithm attains the approximation guarantee of $1/2$ for unconstrained bisubmodular maximization (see also \citep{ward2016maximizing}). For $k$-submodular maximization with $k\geq 3$, \citet{ward2016maximizing} achieve a $\max(1/3, 1/(1+a))$-approximation, where $a=\max(1, \sqrt{(k-1)/4})$. \citet{iwata2016improved} improve this result to a factor of $1/2$ and show that there is a $(k/(2k-1))$-approximation algorithm for unconstrained monotone $k$-submodular maximization. {The authors further prove that their algorithms are asymptotically tight. In terms of  \emph{constrained} $k$-submodular maximization, researchers predominantly focus on non-negative monotone $k$-submodular functions. For example, \citet{ohsaka2015monotone} propose greedy algorithms for maximizing non-negative monotone $k$-submodular functions subject to a total cardinality constraint on all selected items, and to individual cardinality constraints on each of the $k$ subsets. Their algorithm achieves a 1/2 approximation ratio under a total cardinality constraint, and this ratio is asymptotically tight. For the latter, the proposed algorithm achieves 1/3-approximation. \citet{sakaue2017maximizing} studies maximization of non-negative monotone $k$-submodular functions under matroid constraints. The proposed greedy algorithm yields a 1/2-approximate solution.} \\

{To the best of our knowledge, there is no algorithm for maximizing possibly non-monotone $k$-submodular functions under general constraints.} There has also been a paucity of research on \emph{exact} solution methods for $k$-submodular maximization, {both constrained and unconstrained. Therefore, we are interested in developing a versatile exact solution method.} {Practically, in situations where we have limited computing resources and an approximate solution suffices, approximation algorithms are extremely valuable. On the other hand, for problems such as high capital investments and strategic decision-making, where optimality is important and more computing resources are available, it is desirable to apply an exact method that solves the problem within a reasonable amount of time.} Given the formidable computational burden of exhaustive search, there are no known efficient exact methods for constrained $k$-submodular maximization. To bridge this gap, our paper takes a polyhedral approach and proposes the first computationally attractive exact solution method {that handles non-monotone $k$-submodular functions, as well as arbitrary linear constraints}. \\

Before we summarize our results, we provide a few examples from a wide range of applications of $k$-submodular maximization. 
\subsection{Multi-type sensor placement} 
\label{subsect:sensor}
Sensor networks---enabled by internet of things (IoT) technology---provide real-time monitoring and control of systems to operate smart cities \citep{zanella2014internet}, smart homes \citep{ghayvat2015wellness}, and smart grids \citep{abujubbeh2019software}. These applications often call for multiple types of sensors in the network. For example, in smart water distribution networks, multiple types of sensors are placed to measure different aspects of water quality in real time \citep{Singapore2016ManagingTW}. Suppose we have $k$ types of sensors and a set $N$ of $n$ locations to place them. If at most one sensor is allowed in each location, then every $k$-tuple of pairwise disjoint subsets of $N$ corresponds to a multi-type sensor placement plan. The effectiveness of a sensor deployment plan can usually be evaluated using $k$-submodular functions such as the entropy function. Thus a multi-type sensor placement problem can be expressed in terms of constrained $k$-submodular maximization.  \\

We provide more details of \emph{coupled sensor placement}, in which we have two types of sensors for different measurements, such as temperature and humidity. {Here, $k=2$ is an arbitrary choice for illustration purposes. The description below can be generalized to the cases with $k\geq 3$. } Every biset $(S_1, S_2)\in 3^N$ corresponds to a coupled sensor placement plan, in which the type-1 sensors are placed at the locations in $S_1$ and the type-2 sensors are placed at  $S_2$. Due to a limited budget, we can place at most $B_1$ type-1 and $B_2$ type-2 sensors. We evaluate each sensor deployment plan using entropy, which measures how much uncertainty in the environment the sensors can capture \citep{ohsaka2015monotone}. The entropy of a discrete random variable $X$ with support $\mathcal{X}$ is computed by 
\[H(X) = -\sum_{x\in \mathcal{X}} \mathbb{P}(X = x)\log \mathbb{P}(X = x). \]
The entropy of $X$ is high if multiple outcomes occur with similar probabilities, making it difficult for us to predict what we may observe. For instance, it is harder for us to guess the outcome of throwing a fair dice correctly than that of a biased dice, so the entropy in the case of a fair dice is higher than the latter. In the context of coupled sensor placement, a discrete random variable $X_{S_1, S_2}$ captures the possible observations reported by sensors installed at $(S_1, S_2)\in 3^N$, and the set $\mathcal{X}_{S_1,S_2}$ contains all possible observations. The entropy of $X_{S_1,S_2}$ is 
\[H(X_{S_1,S_2}) = -\sum_{x\in \mathcal{X}_{S_1,S_2}} \mathbb{P}(X_{S_1,S_2} = x)\log \mathbb{P}(X_{S_1,S_2} = x). \] In an ideal coupled sensor placement plan, sensors are installed at locations where the corresponding  observations are the most unpredictable. In other words, a placement $(S_1^*, S_2^*)$ is the best when  $H(X_{S_1^*,S_2^*})$ is maximal among all feasible $(S_1, S_2)\in 3^N$. \\

{As mentioned earlier, the description above holds for $k\geq 2$ types of sensors. The function $f:(k+1)^N \rightarrow \mathbb{R}$, defined by $f(S_1, \dots, S_k) = H(X_{S_1,\dots, S_k})$ for all $(S_1, \dots, S_k)\in (k+1)^N$, is monotone and $k$-submodular \citep{ohsaka2015monotone}. Thus the multi-type sensor placement problem is a cardinality-constrained $k$-submodular maximization problem with objective function $f$. }

{\subsection{Multi-topic influence maximization}
Social networks have allowed information to be spread faster than ever. In applications such as viral marketing, we may find a class of $k$-submodular maximization problems in which we aim to maximize the spread of information on $k$ topics over a social network. Suppose one would like to promote $k$-types of products. He or she may select influential network users to share their experiences of the products with their followers. As these followers share again with their own followers, messages about the products gradually diffuse across the network and reach a possibly large population. \citet{kempe2015maximizing} propose a diffusion model called independent cascade to describe the diffusion mechanism of a single type of influence. The authors also show that the expected total number of reached network users is a submodular function of the initial source of the spread. \citet{ohsaka2015monotone} generalize this model to allow $k\geq 2$ types of influence. In their model, a social network is represented by a digraph $G=(N,A)$, in which $N=\{1,2,\dots, n\}$ is the set of network users, and the arcs $A$ capture how the users interact with each other. Every arc $(i,j)\in A$ is associated with probabilities $p_{(i,j)}^q$ for every $q\in\{1,2,\dots,k\}$. Each probability $p_{(i,j)}^q$ is the likelihood of $j$ accepting $i$'s information on the $q$-th topic. Once $j$ adopts a new piece of information, this user is ready to influence his or her own neighbors. Let $S_q\subseteq N$ be the group of influencers responsible for promoting the $q$-th type of products and $A_q(S_q)$ be the individuals influenced by the initial spreaders $S_q$ about product $q$ under the stochastic model described above. Each influencer is restricted to accepting at most one  type of sponsored product for fairness. Thus the initial influencers form a $k$-tuple of pairwise disjoint subsets of $N$. The function $f:(k+1)^N\rightarrow \mathbb{R}$ defined by $f(S_1, \dots, S_k) = \mathbb{E}|\bigcup_{q=1}^k A_q(S_q)|$ computes the expected total number of influenced individuals given $k$ sets of initial influencers. This function is shown to be monotone and $k$-submodular \citep{ohsaka2015monotone}.}

\subsection{Multi-class feature selection} 
Feature selection plays a key role in multiple fields of research including machine learning \citep{shalev2014understanding}, bioinformatics \citep{saeys2007review}, and data mining \citep{rokach2008data}. This process improves the analysis of large datasets by reducing the dimensionality of data. In the resulting multi-class feature selection  problems, there are $k$ uncorrelated prediction variables, with their associated features mixed in a pool. The task is not only to find the most informative features, but also to classify the features with respect to the prediction variables, giving rise to a $k$-submodular optimization problem. \\

With two prediction variables, such problem is referred to as \emph{coupled feature selection} in \citep{singh2012bisubmodular}. More formally, suppose that a Gaussian graphical model and a set of features $N$ are given. Let $C_1, C_2$ be two variables to be predicted. The goal is to partition the features in $N$ into two sets $S_1$ and $S_2$, such that $S_1$ is used to predict $C_1$, and $S_2$ is used to predict $C_2$. It is assumed that $S_1, S_2$ are mutually conditionally independent given $C = \{C_1,C_2\}$. The total number of selected features, $|S_1|+|S_2|$, is no more than a given number $k$. Next, we describe the score function that evaluates the mutual information obtained by a coupled feature selection. 
Suppose $X$ and $Y$ are discrete random variables, and $\mathcal{X},\mathcal{Y}$ are the respective supports. Then the conditional entropy of $X$ given $Y$ is 
\[H(X|Y) = -\sum_{x\in \mathcal{X},y\in\mathcal{Y}} \mathbb{P}(X=x, Y=y) \log \frac{\mathbb{P}(X=x, Y=y)}{\mathbb{P}(Y = y)}. \] 
The biset mutual information is computed by 
\[I(S_1,S_2; C) = H(S_1\cup S_2) - \sum_{i\in S_1}H(i\mid C_1) - \sum_{j\in S_2}H(j\mid C_2), \] 
where $H$ is the entropy function discussed in Section \ref{subsect:sensor} and the conditional entropy is defined above. Intuitively, the features with higher mutual information are more informative about both prediction tasks. Let $f(S_1,S_2) = I (S_1,S_2;C)$. The function $f$ is monotone and bisubmodular \citep{singh2012bisubmodular}, and the best features can be found by maximizing $f$.

\subsection{Drug-drug interaction detection}
Drug-drug interactions (DDIs) detection is an important application in the healthcare domain which exploits bisubmodularity. DDIs are the reactions resulting from using multiple drugs concomitantly. DDIs are a major cause of morbidity and mortality \citep{lu2015novel}---adverse drug events cause 770,000 injuries and deaths every year, and as much as 30\% of these adverse drug events are due to DDIs \citep{tatonetti2012novel, pirmohamed1998drug}. \citet{hu2019bi} show that the correlations among the combinations of drugs and associated symptoms can be captured by a bisubmodular function, and the potential DDIs are determined by maximizing this function. \\

The aforementioned applications are solved using approximation algorithms due to the lack of exact solution methods. 

\subsection{Our contributions}
{Despite the developments in approximation algorithms for unconstrained and a few classes of constrained $k$-submodular maximization, there is no known exact method other than exhaustive search for general $k$-submodular maximization. To bridge this gap, we propose the first polyhedral approach to study $k$-submodular function maximization and provide an exact algorithm to maximize any $k$-submodular function subject to general constraints.} We propose a new class of valid linear inequalities called $k$-submodular inequalities. {These inequalities are non-trivial extensions of the submodular inequalities introduced by \citet{nemhauser1988integer}, in that the proposed $k$-submodular inequalities account for the interchanges of elements among the $k$ subsets.} {With these valid inequalities, we develop an exact cutting-plane algorithm for constrained $k$-submodular maximization problems, which does not require the $k$-submodular objective function to satisfy any restrictive assumptions, such as monotonicity and non-negativity, nor does it restrict the structure of the constraints.} We demonstrate the effectiveness of our algorithm by experimenting on the {multi-type  sensor placement problem, which has a highly nonlinear $k$-submodular objective function.} The computational experiments show that our algorithm significantly outperforms the exhaustive search method.

\subsection{Outline}
The outline of this paper is as follows. In Section \ref{sect:prelim}, we provide formal definitions of $k$-submodularity and review its known properties. In Section \ref{sect:properties}, we state and prove additional properties of $k$-submodular functions that have not been studied in the literature. These properties are used to establish our main results. Next,  in Section \ref{sect:k_sub_ineq}, we propose a class of valid linear inequalities which we call the \emph{$k$-submodular inequalities} for the hypograph of any $k$-submodular function. In particular, we show that maximizing a $k$-submodular function is equivalent to solving a mixed-integer linear program with exponentially many $k$-submodular inequalities. In Section \ref{sect:cut_plane}, we give a cutting plane algorithm to solve the constrained maximization problems with $k$-submodular objective functions. We demonstrate the {efficiency} of our proposed algorithm and compare it against the exhaustive search method on the {multi-type sensor placement problem} in Section \ref{sect:comp}. Lastly, we conclude with a few remarks in Section \ref{sect:conclusion}.

\section{Preliminaries}
\label{sect:prelim}
Let $N=\{1,2, \dots, n\}$ be a non-empty finite set. For any integer $k\geq 1$, we let 
\[(k+1)^N = \{(S_1,S_2, \dots, S_k): S_q\subseteq N, S_q\cap S_{q'} = \emptyset, \text{ for all } q, q' \in \{1,2,\dots,k\} \text{ with } q \neq q'\}\] 
be the collection of all $k$-sets, which are $k$-tuples of pairwise disjoint subsets of $N$. For brevity, we denote any $(S_1,S_2, \dots, S_k)\in (k+1)^N$ by $\mathbf{S}$. We call $\mathbf{S}\in (k+1)^N$ a \emph{partition}, or an \emph{orthant}, of $N$, if $\bigcup_{q=1}^k S_q = N$. 
  
\begin{definition}
\label{def:k_sub}
For any integer $k\geq 1$, a function $f:(k+1)^N \rightarrow \mathbb{R}$ is \emph{$k$-submodular} if for any $\mathbf{X}=(X_1, X_2, \dots, X_k), \mathbf{Y}=(Y_1, Y_2, \dots, Y_k)\in (k+1)^N$, 
\begin{align*}
f(\mathbf{X}) + f(\mathbf{Y}) \geq &\hspace{0.2cm} f(\mathbf{X} \sqcap \mathbf{Y}) + f(\mathbf{X} \sqcup \mathbf{Y}),
\end{align*}
where \[\mathbf{X} \sqcap \mathbf{Y} = (X_1\cap Y_1, X_2 \cap Y_2, \dots, X_k \cap Y_k),\] and \[\mathbf{X} \sqcup \mathbf{Y} = \left((X_1\cup Y_1 )\backslash \bigcup_{q=2}^k(X_q\cup Y_q), \dots, (X_k \cup Y_k)\backslash \bigcup_{q=1}^{k-1} (X_q\cup Y_q)\right).\] 
\end{definition}

In particular, the functions satisfying Definition \ref{def:k_sub} when $k=1$ are called \emph{submodular functions}, and when $k=2$ such functions are referred to as \emph{bisubmodular functions}. In the following discussion, we assume $k\geq 2$ unless specified otherwise. For any $q \in \{1,2,\dots, k\}$, $i\in N\backslash \bigcup_{q'\in\{1,\dots,k\}\backslash \{q\}} X_{q'}$ and $\mathbf{X} \in (k+1)^N$, we define 
\[\rho_{q, i}(\mathbf{X}) = f(X_1, \dots, X_q \cup\{i\}, \dots, X_k) - f(\mathbf{X}). \]
Intuitively, $\rho_{q, i}(\mathbf{X})$ represents the marginal contribution of adding $i\in N$ to the $q$-th subset of $\mathbf{X}$. \citet{ando1996characterization} provide an alternative definition of bisubmodularity that involves the notion of marginal contribution. \citet{ward2016maximizing} generalize this result to $k$-submodularity. Before explaining this equivalent definition of $k$-submodular functions, we first establish a new term. 

\begin{definition}
A function $f:(k+1)^N \rightarrow \mathbb{R}$ is \emph{submodular over a partition} $\mathbf{S} = (S_1, S_2, \dots, S_k)$ if the function
\begin{equation}
\label{eq:k_sub_partition}
\hat f_{\mathbf{S}}(X) := f(X\cap S_1, X\cap S_2, \dots, X\cap S_k)
\end{equation}
is submodular over $X\subseteq N$. 
\end{definition}

\begin{lemma} \cite[]{ward2016maximizing}
For an integer $k\geq 2$, a function $f:(k+1)^N \rightarrow \mathbb{R}$ is $k$-submodular if and only if 
\begin{itemize}
\item[{(C1)}] the function $f$ is submodular over every partition of $N$, and 
\item[{(C2)}] given any $\mathbf{X}\in (k+1)^N$ and any $i\in N\backslash \bigcup_{p=1}^k X_p$, $\rho_{q, i}(\mathbf{X}) + \rho_{q', i}(\mathbf{X}) \geq 0$ for every pair of $q, q'\in\{1,2,\dots, k\}$ such that $q \neq q'$. 
\end{itemize}
\end{lemma}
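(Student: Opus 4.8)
The statement is an equivalence, so I would prove the two implications separately. For the forward direction (a $k$-submodular $f$ satisfies (C1) and (C2)), both conditions follow by specializing the defining inequality to well-chosen pairs. For (C1), fix a partition $\mathbf{S}$ and arbitrary $X, Y \subseteq N$, and apply the $k$-submodular inequality to $\mathbf{X} = (X \cap S_1, \dots, X \cap S_k)$ and $\mathbf{Y} = (Y \cap S_1, \dots, Y \cap S_k)$; since the $S_q$ are pairwise disjoint, one checks directly that $\mathbf{X} \sqcap \mathbf{Y}$ and $\mathbf{X} \sqcup \mathbf{Y}$ are exactly the restrictions of $X \cap Y$ and $X \cup Y$ to $\mathbf{S}$, so the inequality becomes the submodular inequality for $\hat f_{\mathbf{S}}$. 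For (C2), fix $\mathbf{X}$ and $i \notin \bigcup_p X_p$, and apply the defining inequality to the two $k$-sets obtained from $\mathbf{X}$ by adding $i$ to coordinate $q$ and to coordinate $q'$; a short computation shows that both the meet and the join of this pair equal $\mathbf{X}$, so the inequality collapses to $\rho_{q,i}(\mathbf{X}) + \rho_{q',i}(\mathbf{X}) \geq 0$. This direction is routine.

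The substance is the converse: (C1) and (C2) imply the defining inequality for arbitrary $\mathbf{X}, \mathbf{Y} \in (k+1)^N$. Call an element \emph{conflicting} if it is assigned to two distinct nonempty coordinates in $\mathbf{X}$ and $\mathbf{Y}$, and call two $k$-sets \emph{compatible} if they share no conflicting element. The plan is to induct on the number of conflicting elements between $\mathbf{X}$ and $\mathbf{Y}$. In the base case of no conflicts, $\mathbf{X}$ and $\mathbf{Y}$ embed into a common partition $\mathbf{S}$, and exactly as in the forward proof of (C1) the quantities $f(\mathbf{X}), f(\mathbf{Y}), f(\mathbf{X}\sqcap\mathbf{Y}), f(\mathbf{X}\sqcup\mathbf{Y})$ become the values of the submodular function $\hat f_{\mathbf{S}}$ at $R_X, R_Y, R_X \cap R_Y, R_X \cup R_Y$ (writing $R_Z = \bigcup_p Z_p$), so the inequality is immediate from (C1). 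For the inductive step, pick a conflicting element $i$, say $i \in X_a$ and $i \in Y_b$ with $a \neq b$, and let $\mathbf{X}'$ be $\mathbf{X}$ with $i$ deleted. Then $\mathbf{X}'$ and $\mathbf{Y}$ have one fewer conflict, $\mathbf{X}'\sqcap\mathbf{Y} = \mathbf{X}\sqcap\mathbf{Y}$, and $\mathbf{X}'\sqcup\mathbf{Y}$ equals $\mathbf{X}\sqcup\mathbf{Y}$ with $i$ added to coordinate $b$; expanding $f(\mathbf{X})$ and $f(\mathbf{X}'\sqcup\mathbf{Y})$ through marginal contributions and invoking the induction hypothesis reduces the target to the single inequality $\rho_{a,i}(\mathbf{X}') + \rho_{b,i}(\mathbf{X}\sqcup\mathbf{Y}) \geq 0$, where---crucially---$\mathbf{X}'$ and $\mathbf{X}\sqcup\mathbf{Y}$ are compatible and $i$ is unassigned in both.

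The main obstacle is precisely this leftover inequality: it compares marginal gains in two \emph{different} coordinates ($a$ versus $b$) evaluated at two \emph{different} base $k$-sets, so neither (C1) nor (C2) applies on its own. I would isolate it as an auxiliary lemma: if $\mathbf{U}, \mathbf{V}$ are compatible with $i$ unassigned in both, then $\rho_{a,i}(\mathbf{U}) + \rho_{b,i}(\mathbf{V}) \geq 0$ for every pair of distinct coordinates $a \neq b$. This lemma I would prove by a second induction, on the size of the symmetric difference $R_U \triangle R_V$. When $R_U = R_V$, compatibility forces $\mathbf{U} = \mathbf{V}$ and the claim is exactly (C2). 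Otherwise I pick an element $m$ lying in exactly one of the supports and move it to the other configuration \emph{in its existing coordinate}: if $m \in R_V \setminus R_U$ I add $m$ to $\mathbf{U}$, obtaining $\mathbf{U}^+ \supseteq \mathbf{U}$ with a smaller symmetric difference, and since $\mathbf{U}$ and $\mathbf{U}^+$ lie in a common partition carrying $i$ in coordinate $a$, orthant-submodularity (C1) gives $\rho_{a,i}(\mathbf{U}) \geq \rho_{a,i}(\mathbf{U}^+)$, so the induction hypothesis $\rho_{a,i}(\mathbf{U}^+) + \rho_{b,i}(\mathbf{V}) \geq 0$ closes the step; the case $m \in R_U \setminus R_V$ is symmetric, enlarging $\mathbf{V}$ and using submodularity in coordinate $b$. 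The delicate point is the \emph{direction} of this reduction: one must grow the supports toward their union (so that diminishing marginal returns from (C1) point the right way) rather than shrink them, and one must move each element along the coordinate it already occupies so that compatibility---and hence the common-partition embedding needed to apply (C1)---is preserved throughout.
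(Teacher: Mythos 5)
Your proof is correct, but there is no in-paper proof to compare it against: the paper states this lemma as a quoted result from \citet{ward2016maximizing} and gives no argument of its own (its only comment is that the nonnegativity assumption in that reference can be dropped by shifting). Your write-up therefore serves as a self-contained substitute for the external citation, and it checks out. In the forward direction, both specializations of the defining inequality collapse exactly as you claim: for (C1), disjointness of the $S_q$ means the set differences in the join remove nothing, and for (C2), both the meet and the join of the two augmented $k$-sets equal $\mathbf{X}$ because $i$ occupies two distinct coordinates. In the converse, the reduction identity is right---$f(\mathbf{X}) = f(\mathbf{X}') + \rho_{a,i}(\mathbf{X}')$ and $f(\mathbf{X}'\sqcup\mathbf{Y}) = f(\mathbf{X}\sqcup\mathbf{Y}) + \rho_{b,i}(\mathbf{X}\sqcup\mathbf{Y})$, so the induction hypothesis leaves precisely $\rho_{a,i}(\mathbf{X}') + \rho_{b,i}(\mathbf{X}\sqcup\mathbf{Y}) \geq 0$---and the two base $k$-sets in that leftover inequality are indeed compatible, since any element of $X'_p$ lies in $X_p\cup Y_p$ and is therefore deleted from every coordinate $r\neq p$ of $\mathbf{X}\sqcup\mathbf{Y}$. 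Your auxiliary lemma's induction is also sound: the moved element $m$ can never equal $i$ (as $i$ is unassigned in both supports, it is not in the symmetric difference), growing the supports toward their union keeps the diminishing-returns inequality from (C1) pointing the right way, and moving $m$ within its existing coordinate preserves compatibility, so the common-partition embedding (take $S_p \supseteq U_p$, place $i$ in $S_a$ and $m$ in $S_c$, and assign the remaining elements arbitrarily) is available at every step, with the base case $R_U = R_V$ collapsing to exactly (C2). This is essentially the orthant-submodularity plus pairwise-monotonicity characterization established in the cited reference, reproved here in full.
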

Although \citet{ward2016maximizing} assume the $k$-set functions to be non-negative, shifting such functions by a constant does not affect their $k$-submodularity. The next corollary immediately follows from condition (C1). It captures the diminishing marginal return property of $k$-submodular functions over every partition. 
\begin{corollary}
\label{lemma:marginal}
If $f$ is a $k$-submodular function, then for any $\mathbf{X},\mathbf{Y}\in (k+1)^N$ that satisfy $X_p \subseteq Y_p$ for all $p \in \{1,\dots, k\}$, $\rho_{q, i}(\mathbf{X}) \geq \rho_{q, i}(\mathbf{Y})$ for all $i\in N\backslash \bigcup_{p=1}^k Y_p$ and $q\in\{1,\dots, k\}$. 
\end{corollary}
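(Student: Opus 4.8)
The plan is to derive this diminishing-returns statement directly from condition (C1), that is, from the submodularity of $f$ over a single, carefully chosen partition of $N$. The key observation is that both $\rho_{q,i}(\mathbf{X})$ and $\rho_{q,i}(\mathbf{Y})$ can be realized as marginal gains of the \emph{same} submodular set function $\hat f_{\mathbf{S}}$, so that the ordinary submodular diminishing-returns inequality yields the claim inside one orthant.

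First I would fix $q$ and $i \in N \setminus \bigcup_{p=1}^k Y_p$ and build the partition $\mathbf{S}$. Letting $R = N \setminus \bigcup_{p=1}^k Y_p$ denote the elements left unassigned by $\mathbf{Y}$ (note $i \in R$), I would set $S_q = Y_q \cup R$ and $S_p = Y_p$ for $p \neq q$. Since the $Y_p$ are pairwise disjoint and $R$ is disjoint from each $Y_p$, the tuple $\mathbf{S}$ is a genuine partition of $N$ with $i \in S_q$ and $X_p \subseteq Y_p \subseteq S_p$ for every $p$. By (C1), the set function $\hat f_{\mathbf{S}}(X) = f(X \cap S_1, \dots, X \cap S_k)$ is submodular as a function of $X \subseteq N$.

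The second step is to recover the two marginal gains from $\hat f_{\mathbf{S}}$. Writing $A = \bigcup_{p=1}^k X_p$ and $B = \bigcup_{p=1}^k Y_p$, the disjointness of the $S_p$ together with $X_p \subseteq S_p$ gives $A \cap S_p = X_p$, so $\hat f_{\mathbf{S}}(A) = f(\mathbf{X})$; and because $i \in S_q$ while $i \notin S_p$ for $p \neq q$, one obtains $\hat f_{\mathbf{S}}(A \cup \{i\}) = f(X_1,\dots,X_q \cup \{i\},\dots,X_k)$. Hence $\hat f_{\mathbf{S}}(A \cup \{i\}) - \hat f_{\mathbf{S}}(A) = \rho_{q,i}(\mathbf{X})$, and the identical computation with $B$ in place of $A$ gives $\hat f_{\mathbf{S}}(B \cup \{i\}) - \hat f_{\mathbf{S}}(B) = \rho_{q,i}(\mathbf{Y})$. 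Since $X_p \subseteq Y_p$ for all $p$ forces $A \subseteq B$, and $i \notin B$, applying the decreasing-marginal-differences form of submodularity of $\hat f_{\mathbf{S}}$ to the nested sets $A \subseteq B$ and the fresh element $i$ delivers $\rho_{q,i}(\mathbf{X}) \geq \rho_{q,i}(\mathbf{Y})$.

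The only real care needed is in the bookkeeping of the second step: verifying that restricting to $\mathbf{S}$ reproduces exactly $f(\mathbf{X})$, $f(\mathbf{Y})$, and the two corresponding values after adding $i$ to coordinate $q$, which hinges on $i$ landing in $S_q$ and on all of $X_p, Y_p$ sitting inside their respective $S_p$. I do not expect any genuine difficulty beyond this; once the partition is arranged so that both $\mathbf{X}$ and $\mathbf{Y}$ are compatible with $\mathbf{S}$, the corollary is simply the standard equivalence between submodularity and decreasing marginal differences, transported into a single orthant via (C1).
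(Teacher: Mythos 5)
Your proof is correct and follows exactly the route the paper intends: the paper states the corollary ``immediately follows from condition (C1),'' and your argument simply makes that explicit by constructing the partition $S_q = Y_q \cup (N \setminus \bigcup_{p} Y_p)$, $S_p = Y_p$ for $p \neq q$, in which both $\mathbf{X}$ and $\mathbf{Y}$ live, and then invoking the diminishing-marginal-returns form of submodularity of $\hat f_{\mathbf{S}}$. Incidentally, this is the same partition-completion trick the paper itself uses later in the proof of Lemma \ref{lemma:k_sub_monotone}, so your write-up can be viewed as the missing detail behind the paper's ``immediately.''
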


\begin{definition} 
A $k$-submodular function $f$ over a ground set $N$ is \emph{monotone non-decreasing} if for any $\mathbf{X},\mathbf{Y}\in (k+1)^N$ such that $X_q \subseteq Y_q$ for all $q \in \{1,\dots, k\}$, the property $f(\mathbf{Y})\geq f(\mathbf{X})$ holds.
\end{definition} Equivalently, $f$ is monotone non-decreasing if for any $\mathbf{X} \in (k+1)^N$ and $i\in N\backslash \bigcup_{p=1}^k X_p$, $\rho_{q,i}(\mathbf{X}) \geq 0$ for all $q \in \{1,\dots, k\}$. We call a monotone non-decreasing function simply a \emph{monotone} function.\\

Without loss of generality, we assume that $f(\bfs{\emptyset}) = 0$ where $\bfs{\emptyset}$ is the $k$-set $(\emptyset, \dots, \emptyset)$. By slightly abusing notation, we let $f(\mathbf{X}) = f(\mathbf{x})$, where $\mathbf{x} = {[{x^1}, \dots, {x^k}]}^\top$ and $x_q^\top \in \{0,1\}^{n} = \mathbb{B}^{n}$ for every $q\in\{1,\dots,k\}$. To be more precise, $x^q_i = 1$ if $i\in X_q$, and $x^q_i = 0$ otherwise for $i\in N$ and $q\in\{1,\dots, k\}$. This is a unique one-to-one mapping between $(k+1)^N$ and $\{\mathbf{x}\in\mathbb{B}^{kn}: \sum_{q = 1}^k x^q_i \leq 1 \text{ for all } i\in N \}$. The hypograph of $f$ is
\[\mathcal{T}_f = \left\{(\mathbf{x},\eta)\in \mathbb{B}^{kn} \times \mathbb{R} : \eta \leq f(\mathbf{x}), \sum_{q = 1}^k x^q_i \leq 1 \text{ for all } i\in N \right\}. \]
 
In this study, we consider maximization problems with $k$-submodular objective functions, namely
\begin{equation}
\label{eq:original_max}
\max_{\mathbf{X} \in \mathcal{X}} f(\mathbf{X}),
\end{equation} where $f$ is $k$-submodular and $\mathcal{X}\subseteq (k+1)^N$ denotes the collection of feasible $k$-sets. When the problem is unconstrained, $\mathcal{X}$ is $(k+1)^N$. Let $\mathcal{K}$ be the set of incidence vectors $\mathbf{x}$ that correspond to the feasible $k$-sets in $\mathcal{X}$. Problem (\ref{eq:original_max}) can be rewritten as 
\begin{equation}
\label{eq:hypograph}
\max\{\eta : (\mathbf{x}, \eta)\in \mathcal{T}_f, \mathbf{x}\in \mathcal{K}\}. 
\end{equation} 
 
In Section \ref{sect:k_sub_ineq}, we propose a set of valid linear inequalities for $\mathcal{T}_f$. By using these inequalities in a cutting plane framework, we propose the first computationally feasible exact method to solve problem \eqref{eq:hypograph} in Section \ref{sect:cut_plane}.  Before we do so, we first identify additional properties of $k$-submodular functions in the next section.

\section{New Properties of $k$-submodular Functions}
\label{sect:properties}
In this section, we establish a few properties of $k$-submodular functions that are not previously discussed in the literature to the best of our knowledge. These properties are useful for deriving valid linear inequalities for $\mathcal{T}_f$ in Section \ref{sect:k_sub_ineq}. 

\begin{lemma}
\label{lemma:k_sub_monotone}
Given a ground set $N = \{1,2,\dots, n\}$, a function $f:(k+1)^N \rightarrow \mathbb{R}$ is $k$-submodular and monotone if and only if 
\begin{equation}
\label{eq:k_sub_monotone}
\hat f_{\mathbf{S}}(Y) \leq \hat f_{\mathbf{S}}(X) + \sum_{i\in Y\backslash X} [\hat f_{\mathbf{S}}(X\cup\{i\})-\hat f_{\mathbf{S}}(X)]
\end{equation} for any $X,Y\subseteq N$ over any partition $\mathbf{S}$ of $N$.
\end{lemma}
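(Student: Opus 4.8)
The plan is to recognize that the claimed inequality is precisely the statement that the set function $\hat f_{\mathbf{S}}$ is simultaneously monotone non-decreasing and submodular, for every partition $\mathbf{S}$ of $N$, and then to connect these two properties of $\hat f_{\mathbf{S}}$ back to the $k$-submodularity and monotonicity of $f$. The key bridge I would set up first is the marginal-contribution identity: for a partition $\mathbf{S}$, an element $i\in S_q$, and a set $X\subseteq N$, one has $\hat f_{\mathbf{S}}(X\cup\{i\}) - \hat f_{\mathbf{S}}(X) = \rho_{q,i}(\mathbf{X}')$, where $\mathbf{X}' = (X\cap S_1,\dots,X\cap S_k)$. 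Conversely, given any $\mathbf{X}\in(k+1)^N$ and any coordinate $q$, I would build a partition $\mathbf{S}$ that absorbs the unused elements $N\setminus\bigcup_p X_p$ into $S_q$ (and sets $S_p = X_p$ for $p\neq q$), so that with $A=\bigcup_p X_p$ we get $\hat f_{\mathbf{S}}(A)=f(\mathbf{X})$ and the marginal of adding $i\notin A$ recovers $\rho_{q,i}(\mathbf{X})$.

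For the forward direction, I fix a partition $\mathbf{S}$. Condition (C1) gives that $\hat f_{\mathbf{S}}$ is submodular, and the identity above together with monotonicity of $f$ (i.e.\ $\rho_{q,i}\geq 0$) gives that $\hat f_{\mathbf{S}}$ is monotone. I would then run the standard monotone-submodular estimate: for arbitrary $X,Y$, first use monotonicity to write $\hat f_{\mathbf{S}}(Y)\le \hat f_{\mathbf{S}}(X\cup Y)$, then telescope $\hat f_{\mathbf{S}}(X\cup Y)-\hat f_{\mathbf{S}}(X)$ over the elements of $Y\setminus X$ and bound each successive marginal from above by its marginal over $X$ using diminishing returns. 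Chaining these two bounds produces exactly the claimed inequality.

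For the reverse direction I would extract the two needed ingredients by specializing the inequality for each fixed $\mathbf{S}$. Taking $Y\subseteq X$ makes the sum empty and leaves $\hat f_{\mathbf{S}}(Y)\le \hat f_{\mathbf{S}}(X)$, so $\hat f_{\mathbf{S}}$ is monotone; transporting this through the partition construction yields $\rho_{q,i}(\mathbf{X})\ge 0$ for all $q,i,\mathbf{X}$, hence $f$ is monotone. Taking $Y=X\cup\{h,j\}$ with two distinct new elements collapses the inequality to $\hat f_{\mathbf{S}}(X\cup\{h,j\})\le \hat f_{\mathbf{S}}(X\cup\{h\})+\hat f_{\mathbf{S}}(X\cup\{j\})-\hat f_{\mathbf{S}}(X)$, which is the local pairwise submodularity condition; since this condition over all $X$ and all distinct $h,j$ is equivalent to full submodularity, $\hat f_{\mathbf{S}}$ is submodular for every $\mathbf{S}$, giving (C1). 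Finally I would observe that (C2) is automatic: monotonicity already gives $\rho_{q,i}(\mathbf{X})\ge 0$ for every $q$, so $\rho_{q,i}(\mathbf{X})+\rho_{q',i}(\mathbf{X})\ge 0$ trivially. With (C1) and (C2) established, the characterization of $k$-submodularity yields that $f$ is $k$-submodular and monotone.

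The part requiring the most care is the bookkeeping of the correspondence between general $k$-sets (whose coordinates need not cover $N$) and the partition-indexed functions $\hat f_{\mathbf{S}}$: I must verify that the partition built from a given $\mathbf{X}$ genuinely reproduces $f(\mathbf{X})$ and the intended marginals, and that the hypothesis ``for every partition $\mathbf{S}$'' supplies enough instances to certify monotonicity and submodularity at every $\mathbf{X}$ and every coordinate $q$. The remaining analytic content---the telescoping estimate and the local-to-global submodularity characterization---is routine. A pleasant simplification worth flagging is that no separate argument for (C2) is needed, since it follows for free once monotonicity of $f$ has been derived.
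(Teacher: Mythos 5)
Your proposal is correct and takes essentially the same route as the paper: both arguments pivot on the (C1)/(C2) characterization of $k$-submodularity, transfer monotonicity and submodularity between $f$ and $\hat f_{\mathbf{S}}$ using the same partition construction (letting $S_q$ absorb the elements of $N\setminus\bigcup_p X_p$ so that the marginals of $\hat f_{\mathbf{S}}$ recover $\rho_{q,i}(\mathbf{X})$), and note that (C2) follows for free from monotonicity. The only difference is one of packaging: where the paper invokes the Nemhauser--Wolsey characterization of monotone submodular set functions as a known result to pass between inequality \eqref{eq:k_sub_monotone} and the pair (monotone, submodular), you re-derive that classical equivalence inline (telescoping with diminishing returns in one direction; specializing to $Y\subseteq X$ and to $Y=X\cup\{h,j\}$ plus the local-to-global submodularity criterion in the other), which makes the proof self-contained but not structurally different.
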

\begin{proof}
\citet{nemhauser1988integer} show that a set function $g:2^N\rightarrow \mathbb{R}$ is submodular and monotone if and only if \[ g(T) \leq g(S) + \sum_{j\in T\backslash S} [g(S\cup \{j\}) - g(S)] \text{ for any } S,T\subseteq N.\] Suppose $f$ is $k$-submodular and monotone. Given any partition $\mathbf{S}$, $\hat f_{\mathbf{S}}$ is submodular by (C1). For any $P\subseteq Q \subseteq N$, we construct $\mathbf{P},\mathbf{Q}$ such that $P_q = P\cap S_q$ and  $Q_q = Q\cap S_q$ for all $q\in \{1,\dots, k\}$. Since $P_q \subseteq Q_q$ for all $q$ and $f$ is monotone, $\hat f_{\mathbf{S}}(P) = f(\mathbf{P}) \leq f(\mathbf{Q}) = \hat f_{\mathbf{S}}(Q)$, which implies that $\hat f_{\mathbf{S}}$ is monotone. Thus property \eqref{eq:k_sub_monotone} holds. Conversely, suppose \eqref{eq:k_sub_monotone} is true. Then $\hat f_{\mathbf{S}}$ is submodular and monotone over any partition $\mathbf{S}$ of $N$, and (C1) immediately follows. Let any $\mathbf{X}\in (k+1)^N$ and $i\in N\backslash \bigcup_{p=1}^k X_p$ be given. For any $q\in\{1,\dots, k\}$, we construct a partition $\mathbf{S}^q$ such that $S_p^q = X_p$ for all $p\in\{1,\dots, k\}\backslash \{q\}$, and $S_q^q = N\backslash \bigcup_{p\in\{1,\dots, k\}\backslash \{q\}} X_p$. We note that $\mathbf{X}$ and $(X_1,\dots,X_q\cup\{i\}, \dots, X_k)$ are both in the partition $\mathbf{S}^q$. Now $\rho_{q, i}(\mathbf{X}) = \hat f_{\mathbf{S}^q}(X\cup\{i\}) - \hat f_{\mathbf{S}^q}(X) \geq 0$. It follows that $\rho_{q, i}(\mathbf{X}) + \rho_{q', i}(\mathbf{X})\geq 0$ for any $q, q'\in \{1,\dots, k\}$ with $q\neq q'$. Therefore, $f$ is monotone and (C2) holds. We conclude that $f$ is $k$-submodular and monotone. 
\end{proof}

Given a non-monotone submodular function $g$ defined over a ground set $N$, \citet{nemhauser1988integer} show that $g^*(S) := g(S) - \sum_{i\in S} (f(N) - f(N\backslash \{i\}))$ is monotone and submodular. Lemma \ref{lemma:convert_to_monotone} generalizes this result to non-monotone $k$-submodular functions.
\begin{lemma}
\label{lemma:convert_to_monotone}
Let $f:(k+1)^N \rightarrow \mathbb{R}$ be a $k$-submodular function. For every $i\in N$ and $q\in\{1,\dots,k\}$, we define 
\begin{equation*}
\label{eq:xi}
\xi_i^q = \min\left\{\rho_{q, i}(\mathbf{S}) : \mathbf{S}\in (k+1)^{N\backslash\{i\}}, \bigcup_{p=1}^k S_p = N\backslash\{i\} \right\}.
\end{equation*}
The function 
\begin{align*}
f^*(\mathbf{X}) := f(\mathbf{X}) - \sum_{q=1}^k \sum_{i\in X_q} \xi_i^q
\end{align*}
is $k$-submodular and monotone. 
\end{lemma}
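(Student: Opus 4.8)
The plan is to verify that $f^*$ satisfies the marginal-based characterization of $k$-submodularity, namely conditions (C1) and (C2) of the lemma of \citet{ward2016maximizing}, together with the marginal characterization of monotonicity. The crucial observation is that the correction term $m(\mathbf{X}):=\sum_{q=1}^k\sum_{i\in X_q}\xi_i^q$ is \emph{modular}: adding a single element $i$ to the $q$-th set increases $m$ by exactly $\xi_i^q$, independently of the rest of $\mathbf{X}$. Consequently, writing $\rho_{q,i}^*$ for the marginals of $f^*$, I would first record the identity
\[ \rho_{q,i}^*(\mathbf{X}) = \rho_{q,i}(\mathbf{X}) - \xi_i^q, \]
valid for every $\mathbf{X}\in(k+1)^N$, every $i\in N\backslash\bigcup_{p}X_p$, and every $q\in\{1,\dots,k\}$.

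The heart of the argument---and the step I expect to be the main obstacle---is showing that these modified marginals are nonnegative, i.e.\ $\rho_{q,i}(\mathbf{X})\geq\xi_i^q$ for all admissible $\mathbf{X},i,q$. The difficulty is that $\xi_i^q$ is defined as a minimum taken only over \emph{full partitions} of $N\backslash\{i\}$, whereas $\mathbf{X}$ need not be a partition. I would bridge this gap using the diminishing-returns property of Corollary \ref{lemma:marginal}: given $\mathbf{X}$ with $i\notin\bigcup_p X_p$, extend it to a partition $\mathbf{S}\in(k+1)^{N\backslash\{i\}}$ of $N\backslash\{i\}$ by assigning every still-unassigned element of $N\backslash\{i\}$ arbitrarily (say to $S_1$), so that $X_p\subseteq S_p$ for all $p$ and $i\notin\bigcup_p S_p$. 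Corollary \ref{lemma:marginal} then gives $\rho_{q,i}(\mathbf{X})\geq\rho_{q,i}(\mathbf{S})$, and since $\mathbf{S}$ is feasible in the minimization defining $\xi_i^q$, we have $\rho_{q,i}(\mathbf{S})\geq\xi_i^q$. Chaining these inequalities yields $\rho_{q,i}^*(\mathbf{X})\geq 0$.

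With this in hand, the remaining pieces fall into place. For (C1), I would fix any partition $\mathbf{S}$ and note that because each $i\in N$ belongs to exactly one $S_{q(i)}$, the correction restricted to $\mathbf{S}$ collapses to the modular set function $X\mapsto\sum_{i\in X}\xi_i^{q(i)}$; hence $\hat{f^*}_{\mathbf{S}} = \hat f_{\mathbf{S}}-(\text{modular})$ is submodular, since $\hat f_{\mathbf{S}}$ is submodular by (C1) for $f$ and subtracting a modular function preserves submodularity. Condition (C2) and monotonicity are then immediate: since each $\rho_{q,i}^*(\mathbf{X})\geq 0$, in particular $\rho_{q,i}^*(\mathbf{X})+\rho_{q',i}^*(\mathbf{X})\geq 0$ for $q\neq q'$, giving (C2), and the nonnegativity of all single-coordinate marginals is exactly the marginal criterion for monotonicity. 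Invoking the characterization of \citet{ward2016maximizing} then yields that $f^*$ is $k$-submodular, and the marginal criterion gives monotonicity.

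One remark I would add to justify the route taken: it is tempting to prove $k$-submodularity of $f^*$ directly from the defining $\sqcap/\sqcup$ inequality by arguing that the modular correction cancels, but this fails when an element $i$ is assigned to \emph{different} coordinates in $\mathbf{X}$ and $\mathbf{Y}$, so that $i$ lands in neither $\mathbf{X}\sqcap\mathbf{Y}$ nor $\mathbf{X}\sqcup\mathbf{Y}$; the correction is \emph{not} modular with respect to $\sqcap,\sqcup$. This is precisely why routing the argument through (C1)--(C2), where the correction behaves modularly over each partition and the marginal condition is strengthened to nonnegativity, is the clean path.
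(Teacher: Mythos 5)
Your proof is correct: the identity $\rho^*_{q,i}(\mathbf{X})=\rho_{q,i}(\mathbf{X})-\xi_i^q$, the key nonnegativity step via a partition extension, the modularity of the correction over any fixed partition, and the final appeal to (C1)--(C2) and the marginal criterion for monotonicity all check out. But your route is organized differently from the paper's.

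The paper first proves Lemma \ref{lemma:k_sub_monotone}, a Nemhauser--Wolsey-type characterization stating that a function is monotone and $k$-submodular if and only if inequality \eqref{eq:k_sub_monotone} holds over every partition, and then verifies that single combined inequality for $f^*$ through the chain \eqref{eq:rewrite1}--\eqref{eq:f-star_def}: it adds the elements of $Y\backslash X$ using Corollary \ref{lemma:marginal}, removes the elements of $X\backslash Y$ using Corollary \ref{lemma:marginal} applied relative to the partition $\mathbf{T}$, and bounds the removal marginals $\rho_{q,i}(T_1,\dots,T_q\backslash\{i\},\dots,T_k)$ below by $\xi_i^q$ via the definition of $\xi_i^q$. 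You instead verify the marginal conditions directly. The crux of your argument, $\rho_{q,i}(\mathbf{X})\geq\rho_{q,i}(\mathbf{S})\geq\xi_i^q$ for a partition extension $\mathbf{S}$ of $\mathbf{X}$, is essentially the content of the paper's steps \eqref{eq:k_sub_property2}--\eqref{eq:xi_property}, but you isolate it as the clean statement ``all marginals of $f^*$ are nonnegative,'' from which monotonicity and (C2) are immediate; (C1) follows from the observation that over a fixed partition the correction collapses to a modular set function; and you conclude via the characterization of \citet{ward2016maximizing} rather than via Lemma \ref{lemma:k_sub_monotone}. The ingredients are identical (Corollary \ref{lemma:marginal} plus the definition of $\xi_i^q$), so neither route is deeper than the other. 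Your unbundled version is more transparent about \emph{why} the lemma holds---subtracting the worst-case marginal makes every marginal nonnegative, and a partition-wise modular shift cannot destroy partition-wise submodularity---whereas the paper's bundled version handles monotonicity and $k$-submodularity in one chain at the price of a longer computation and a prior lemma. Your closing remark is also accurate: an element assigned to different coordinates in $\mathbf{X}$ and $\mathbf{Y}$ appears in neither $\mathbf{X}\sqcap\mathbf{Y}$ nor $\mathbf{X}\sqcup\mathbf{Y}$, so the correction is not modular with respect to $\sqcap$ and $\sqcup$ and the naive cancellation argument genuinely fails; routing through the marginal characterization is a sound way around this obstacle.
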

\begin{proof}
By Lemma \ref{lemma:k_sub_monotone}, it suffices to show that $\hat f^*_{\mathbf{T}}$ is submodular and monotone for any partition $\mathbf{T}$. Consider any $X,Y\subseteq N$ and any partition $\mathbf{T}$ of $N$. Let $\mathbf{X}$ and $\mathbf{Y}$ be the corresponding $k$-sets over this partition. In other words, $X_q = X\cap T_q$ and $Y_q = Y\cap T_q$ for all $q\in \{1,\dots, k\}$. Then 
\begingroup
\allowdisplaybreaks
\begin{subequations}
\begin{align}
  &   \hat f^*_{\mathbf{T}}(X) + \sum_{i\in Y\backslash X} [\hat f^*_{\mathbf{T}}(X\cup\{i\})-\hat f^*_{\mathbf{T}}(X)] \\
  = \hspace{3pt}   & f^*(\mathbf{X}) + \sum_{q=1}^k \sum_{i\in Y_q\backslash X_q} [f^*(X_1, \dots, X_q\cup\{i\}, \dots, X_k) - f^*(\mathbf{X})] \label{eq:rewrite1}\\
  = \hspace{3pt}   & f(\mathbf{X}) - \sum_{q=1}^k \sum_{i\in X_q} \xi_i^q + \sum_{q=1}^k \sum_{i\in Y_q\backslash X_q} [\rho_{q,i}(\mathbf{X}) - \xi_i^q] \\
  = \hspace{3pt}   & f(\mathbf{X}) + \sum_{q=1}^k \sum_{i\in Y_q\backslash X_q} \rho_{q,i}(\mathbf{X}) - \sum_{q=1}^k \sum_{i\in X_q \cup Y_q} \xi_i^q  \label{eq:rewrite2} \\
  \geq \hspace{3pt}   & f(X_1\cup Y_1, \dots, X_k \cup Y_k) - \sum_{q=1}^k \sum_{i\in X_q \cup Y_q} \xi_i^q \label{eq:k_sub_property}\\
  \geq \hspace{3pt}   & f(\mathbf{Y}) + \sum_{q=1}^k \sum_{i\in X_q\backslash Y_q} \rho_{q,i}(T_1, \dots, T_q\backslash \{i\}, \dots, T_k) - \sum_{q=1}^k \sum_{i\in X_q \cup Y_q} \xi_i^q \label{eq:k_sub_property2}\\
  \geq \hspace{3pt}   & f(\mathbf{Y}) + \sum_{q=1}^k \sum_{i\in X_q\backslash Y_q} \xi_i^q - \sum_{q=1}^k \sum_{i\in X_q \cup Y_q} \xi_i^q \label{eq:xi_property}\\
  = \hspace{3pt}   & f(\mathbf{Y}) - \sum_{q=1}^k \sum_{i\in Y_q} \xi_i^q \\
  = \hspace{3pt}   & f^*(\mathbf{Y}) = \hat f^*_{\mathbf{T}} (Y). \label{eq:f-star_def}
\end{align} 
\end{subequations}
\endgroup
Equations \eqref{eq:rewrite1}-\eqref{eq:rewrite2} rewrite $f^*$ in terms of $f$. Inequality \eqref{eq:k_sub_property} is a consequence of Corollary \ref{lemma:marginal} as we show next. For every $q\in\{1,\dots,k\}$, we fix an ordering of the elements in $Y_q\backslash X_q$ to be $(\alpha^q(1), \alpha^q(2), \dots, \alpha^q(|Y_q\backslash X_q|))$. Then 
\begin{align*}
 & f(X_1\cup Y_1, \dots, X_k \cup Y_k) \\
 = \hspace{3pt}   & f(\mathbf{X}) + \sum_{q=1}^k\sum_{j=1}^{|Y_q\backslash X_q|} \rho_{q, \alpha^q(j)}(X_1\cup Y_1,\dots, X_q\cup \{\alpha^q(r)\}_{r=1}^{j-1}, X_{q+1}, \dots, X_k)\\
 \leq \hspace{3pt}   & f(\mathbf{X}) + \sum_{q=1}^k\sum_{j=1}^{|Y_q\backslash X_q|} \rho_{q, \alpha^q(j)}(\mathbf{X})\\
 =  \hspace{3pt}   & f(\mathbf{X}) + \sum_{q=1}^k \sum_{i\in Y_q\backslash X_q} \rho_{q,i}(\mathbf{X}).
 \end{align*}  Similarly, inequality \eqref{eq:k_sub_property2} holds because 
\[ \rho_{q,i}(X_1\cup Y_1,\dots, X_q\cup Y_q \backslash\{i\}, \dots, X_k\cup Y_k) \geq \rho_{q,i}(T_1, \dots, T_q\backslash \{i\}, \dots, T_k) \] for any $i\in X_q\cup Y_q \subseteq T_q$ where $q\in\{1,\dots, k\}$. Inequality \eqref{eq:xi_property} follows from the definitions of $\xi_i^1$ and $\xi_i^2$. Equations \eqref{eq:f-star_def} follow from the definitions of $f^*$ and $\hat f^*$. 
\end{proof}

\begin{lemma}
\label{lemma:monotone_valid}
Let $f$ be a monotone $k$-submodular function. Given any $\mathbf{X}, \mathbf{S}\in (k+1)^N$, 
\begin{align*}
f(\mathbf{X}) \leq f(\mathbf{S})  & + \sum_{q=1}^k \sum_{i\in X_q\backslash \bigcup_{r=1}^k S_r} \rho_{q, i}(\mathbf{S})  +  \sum_{q=1}^k \sum_{p\in\{1,\dots, k\}\backslash \{q\}} \sum_{i\in X_q\cap S_p} \rho_{q,i}(\bfs{\emptyset}). 
\end{align*} 
\end{lemma}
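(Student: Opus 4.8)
The plan is to mimic the classical proof of the monotone submodular inequality of \citet{nemhauser1988integer}, while carefully accounting for the fact that a single element $i$ may be assigned to different subsets by $\mathbf{X}$ and $\mathbf{S}$. The key observation is to classify each $i \in X_q$ into three types relative to $\mathbf{S}$: \emph{matched} elements with $i \in S_q$; \emph{conflicting} elements with $i \in S_p$ for some $p \neq q$; and \emph{new} elements with $i \notin \bigcup_{r=1}^k S_r$. Only the new elements appear in the first sum on the right-hand side (charged to $\rho_{q,i}(\mathbf{S})$), only the conflicting elements appear in the second sum (charged to $\rho_{q,i}(\bfs{\emptyset})$), and matched elements contribute nothing. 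This classification suggests splitting the passage from $\mathbf{S}$ to $\mathbf{X}$ into two stages through an auxiliary $k$-set $\mathbf{X}'$ obtained from $\mathbf{X}$ by deleting all conflicting elements, i.e.\ $X'_q = (X_q \cap S_q) \cup (X_q \setminus \bigcup_{r} S_r)$.

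First I would bound $f(\mathbf{X}')$ by $f(\mathbf{S})$ plus the new-element marginals. By construction no element of $\mathbf{X}'$ sits in a subset conflicting with $\mathbf{S}$, so the sets $X'_q \cup S_q$ are pairwise disjoint and can be completed to a common partition $\mathbf{T}$ of $N$. Writing $X' = \bigcup_q X'_q$ and $S = \bigcup_q S_q$, the submodular-and-monotone characterization of Lemma \ref{lemma:k_sub_monotone} applied to $\hat f_{\mathbf{T}}$ gives $f(\mathbf{X}') = \hat f_{\mathbf{T}}(X') \le \hat f_{\mathbf{T}}(S) + \sum_{i \in X' \setminus S}[\hat f_{\mathbf{T}}(S\cup\{i\}) - \hat f_{\mathbf{T}}(S)]$. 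Since $X' \setminus S$ is exactly the set of new elements and each bracketed term equals $\rho_{q,i}(\mathbf{S})$ for the unique subset $q$ with $i \in X'_q$, this yields $f(\mathbf{X}') \le f(\mathbf{S}) + \sum_{q} \sum_{i \in X_q \setminus \bigcup_r S_r} \rho_{q,i}(\mathbf{S})$.

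Second I would recover $f(\mathbf{X})$ from $f(\mathbf{X}')$ by adding the conflicting elements back, each into its subset $q$ as prescribed by $\mathbf{X}$. Fixing any order of these additions, telescoping expresses $f(\mathbf{X}) - f(\mathbf{X}')$ exactly as a sum of marginals $\rho_{q,i}$ evaluated at intermediate $k$-sets, each of which lies above $\bfs{\emptyset}$ componentwise and does not yet contain $i$. Corollary \ref{lemma:marginal} (diminishing returns over partitions) then bounds each such marginal by $\rho_{q,i}(\bfs{\emptyset})$, giving $f(\mathbf{X}) \le f(\mathbf{X}') + \sum_q \sum_{p \neq q} \sum_{i \in X_q \cap S_p} \rho_{q,i}(\bfs{\emptyset})$. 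Chaining this with the first-stage bound produces the claimed inequality.

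I expect the only real subtlety, and the main obstacle, to be the treatment of conflicting elements, which is precisely what blocks a one-shot common-partition argument in the style of the submodular case: because $\mathbf{X}$ and $\mathbf{S}$ need not lie in any common partition, I cannot compare them directly, and must instead peel the conflicts off into $\mathbf{X}'$ and charge each to the maximal marginal $\rho_{q,i}(\bfs{\emptyset})$. Verifying that this charging is valid—that $\rho_{q,i}(\bfs{\emptyset})$ dominates every marginal incurred while rebuilding $\mathbf{X}$—is where Corollary \ref{lemma:marginal} does the essential work, and confirming that $\mathbf{X}'$ and $\mathbf{S}$ genuinely share a partition is the one bookkeeping step I would be careful to get right.
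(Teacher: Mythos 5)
Your proof is correct, and it takes a genuinely different route from the paper's. You factor the comparison through the conflict-free $k$-set $\mathbf{X}'$, handling the two nontrivial element classes with two different tools: the new elements via a single application of the common-partition characterization (Lemma \ref{lemma:k_sub_monotone}) to $S$ and $X'$ embedded in a partition $\mathbf{T}$, and the conflicting elements via a telescoping reinsertion whose marginals are bounded by Corollary \ref{lemma:marginal}. The paper never forms $\mathbf{X}'$ and never invokes Lemma \ref{lemma:k_sub_monotone} in this proof. Instead, writing $L_p^q = X_q\cap S_p$, $J_q = X_q\setminus\bigcup_r S_r$, $K_p = S_p\setminus\bigcup_r X_r$, it runs one chain of inequalities upward from $\mathbf{S}$ and $\bfs{\emptyset}$: Corollary \ref{lemma:marginal} lower-bounds the two marginal sums by, respectively, the gain $f(S_1\cup J_1,\dots,S_k\cup J_k)-f(\mathbf{S})$ and the value $f\bigl(\bigcup_{p\neq 1}L_p^1,\dots,\bigcup_{p\neq k}L_p^k\bigr)$ of the stand-alone conflict $k$-set (using $f(\bfs{\emptyset})=0$); monotonicity then strips the conflicting elements out of their $\mathbf{S}$-positions; the raw Definition \ref{def:k_sub} merges the two now-disjoint $k$-sets, since their $\sqcap$ is $\bfs{\emptyset}$ and their $\sqcup$ reduces to a componentwise union; and a final monotonicity step deletes the $K_p$'s. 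So the paper's essential structural move is a disjoint merge via $\sqcup$---a step your argument avoids entirely---while your essential moves are the common-partition embedding of $\mathbf{S}$ and $\mathbf{X}'$ (whose feasibility, the pairwise disjointness of the sets $X'_q\cup S_q$, you correctly identify as the bookkeeping point to verify) and the worst-case charging of each conflict at $\rho_{q,i}(\bfs{\emptyset})$. What your route buys is modularity and transparency: it uses only the derived tools, and it isolates exactly why conflicting elements force the $\rho_{q,i}(\bfs{\emptyset})$ coefficients, which is the feature distinguishing $k$-submodular inequalities from their submodular ancestors. What the paper's route buys is directness: it exercises the defining inequality of $k$-submodularity itself, in a single pass, with no auxiliary $k$-set.
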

\begin{proof}
Let \[X_q = \bigcup_{p=1}^kL^q_p \cup J_q, S_p = \bigcup_{q=1}^k L_p^q \cup K_p \] where $J_q, K_p, {L^q_p}$ are pairwise disjoint subsets of $N$ for all $p, q\in\{1,\dots,k\}$. Observe that $L_p^q = X_q \cap S_p$, $J_q = X_q \backslash \bigcup_{r=1}^k S_r$, and $K_p = S_p \backslash \bigcup_{r=1}^k X_r$, for all $p$ and $q$. Furthermore,
\begingroup
\allowdisplaybreaks
\begin{subequations}
\begin{align}
& f(\mathbf{S}) + \sum_{q=1}^k \sum_{i\in X_q\backslash \bigcup_{r=1}^k S_r} \rho_{q, i}(\mathbf{S})  +  \sum_{q=1}^k \sum_{p\in\{1,\dots, k\}\backslash \{q\}} \sum_{i\in X_q\cap S_p} \rho_{q,i}(\bfs{\emptyset}) \\
= \hspace{3pt} & f\left(\bigcup_{q=1}^k L_1^q \cup K_1, \dots, \bigcup_{q=1}^k L_k^q \cup K_k\right) + \sum_{q=1}^k \sum_{i\in J_q} \rho_{q, i}(\mathbf{S}) + \sum_{q=1}^k\sum_{p=1, p\neq q}^k \sum_{i\in L_p^q} \rho_{q,i}(\bfs{\emptyset})\\
\geq \hspace{3pt} & f\left(\bigcup_{q=1}^k L_1^q \cup K_1\cup J_1, \dots, \bigcup_{q=1}^k L_k^q \cup K_k\cup J_k\right) + f\left(\bigcup_{p=2}^k L^1_p, \dots, \bigcup_{p=1}^{k-1} L^k_p\right) \label{eq:c} \\
\geq \hspace{3pt} & f\left(L_1^1 \cup K_1\cup J_1, \dots, L_k^k \cup K_k\cup J_k\right) +  f\left(\bigcup_{p=2}^k L^1_p, \dots, \bigcup_{p=1}^{k-1} L^k_p\right) \label{eq:d}  \\
\geq \hspace{3pt} &  f\left(\bigcup_{p=1}^k L^1_p \cup K_1\cup J_1, \dots, \bigcup_{p=1}^{k} L^k_p \cup K_k\cup J_k\right) \label{eq:e}  \\
\geq \hspace{3pt} &  f\left(\bigcup_{p=1}^k L^1_p\cup J_1, \dots, \bigcup_{p=1}^{k} L^k_p\cup J_k\right) \label{eq:f} \\
= \hspace{3pt} & f(\mathbf{X}).
\end{align}
\end{subequations}
\endgroup 
Inequality \eqref{eq:c} follows from Corollary \ref{lemma:marginal}. Inequalities \eqref{eq:d} and \eqref{eq:f} are due to the monotonicity of $f$, and inequality \eqref{eq:e} holds because $f$ is $k$-submodular. 
\end{proof}

Lemma \ref{lemma:monotone_valid} applies to all monotone $k$-submodular functions. By using the relationship between any general $k$-submodular function $f$ and its monotone counterpart $f^*$ as stated in Lemma \ref{lemma:convert_to_monotone}, we obtain the following result. 

\begin{corollary}
\label{coro:not_monotone_valid}
Let $f$ be a $k$-submodular function. Given any $\mathbf{X}, \mathbf{S}\in (k+1)^N$, 
\begin{align*}
f(\mathbf{X}) \leq f(\mathbf{S})  & + \sum_{q=1}^k \sum_{i\in X_q\backslash \bigcup_{r=1}^k S_r} \rho_{q, i}(\mathbf{S})  +  \sum_{q=1}^k \sum_{p\in\{1,\dots, k\}\backslash \{q\}} \sum_{i\in X_q\cap S_p} \rho_{q,i}(\bfs{\emptyset}) - \sum_{q=1}^k \sum_{i\in S_q\backslash X_q} \xi_i^q.
\end{align*}
\end{corollary}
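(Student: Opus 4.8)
The plan is to reduce the general case to the monotone case already settled in Lemma \ref{lemma:monotone_valid} by passing to the monotonized function $f^*$ constructed in Lemma \ref{lemma:convert_to_monotone}. Since $f^*$ is $k$-submodular and monotone, Lemma \ref{lemma:monotone_valid} applies verbatim to $f^*$ for the same pair $\mathbf{X},\mathbf{S}$, giving
\[f^*(\mathbf{X}) \leq f^*(\mathbf{S}) + \sum_{q=1}^k \sum_{i\in X_q\backslash\bigcup_{r=1}^k S_r}\rho^*_{q,i}(\mathbf{S}) + \sum_{q=1}^k\sum_{p\in\{1,\dots,k\}\backslash\{q\}}\sum_{i\in X_q\cap S_p}\rho^*_{q,i}(\bfs{\emptyset}),\]
where $\rho^*$ denotes the marginal gain taken with respect to $f^*$. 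The entire argument then consists of rewriting every object on both sides in terms of $f$ and the shifts $\xi_i^q$, and verifying that the residual $\xi$-terms collapse to exactly $-\sum_{q}\sum_{i\in S_q\backslash X_q}\xi_i^q$.

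The computations I would record first are the two marginal identities $\rho^*_{q,i}(\mathbf{S}) = \rho_{q,i}(\mathbf{S}) - \xi_i^q$ and $\rho^*_{q,i}(\bfs{\emptyset}) = \rho_{q,i}(\bfs{\emptyset}) - \xi_i^q$, each of which follows directly from the definition $f^*(\mathbf{Z}) = f(\mathbf{Z}) - \sum_{q}\sum_{i\in Z_q}\xi_i^q$, since inserting the single element $i$ into the $q$-th block introduces precisely one new shift $\xi_i^q$. Substituting these identities, together with $f^*(\mathbf{X}) = f(\mathbf{X}) - \sum_q\sum_{i\in X_q}\xi_i^q$ and $f^*(\mathbf{S}) = f(\mathbf{S}) - \sum_q\sum_{i\in S_q}\xi_i^q$, into the displayed inequality and rearranging the $f$-terms into the shape of the claimed bound leaves a pure correction term
\[\sum_{q=1}^k\sum_{i\in X_q}\xi_i^q - \sum_{q=1}^k\sum_{i\in S_q}\xi_i^q - \sum_{q=1}^k\sum_{i\in X_q\backslash\bigcup_{r=1}^k S_r}\xi_i^q - \sum_{q=1}^k\sum_{p\in\{1,\dots,k\}\backslash\{q\}}\sum_{i\in X_q\cap S_p}\xi_i^q,\]
which I must show equals $-\sum_{q}\sum_{i\in S_q\backslash X_q}\xi_i^q$.

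The crux is the combinatorial accounting of this correction term, which I would carry out by fixing a block index $q$ and classifying each $i\in X_q$ by its position relative to $\mathbf{S}$. Because $\mathbf{S}$ is a $k$-set, its blocks $S_1,\dots,S_k$ are pairwise disjoint, so $i$ belongs to at most one $S_p$; hence every $i\in X_q$ falls into exactly one of the cases $i\notin\bigcup_r S_r$, $i\in S_q$, or $i\in S_p$ for a unique $p\neq q$. The third and fourth sums above cover precisely the first and last of these cases, so subtracting them from $\sum_{i\in X_q}\xi_i^q$ removes every $i\in X_q$ except those in $X_q\cap S_q$, reducing the correction term to $-\sum_q\big(\sum_{i\in S_q}\xi_i^q - \sum_{i\in X_q\cap S_q}\xi_i^q\big) = -\sum_q\sum_{i\in S_q\backslash X_q}\xi_i^q$, as required. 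Everything outside this disjointness-driven case split is mechanical substitution, so I expect the only real obstacle to be the sign and index bookkeeping on the shifts—in particular keeping the superscript $q$ tied to the block of $\mathbf{X}$ rather than that of $\mathbf{S}$ in the $X_q\cap S_p$ sum.
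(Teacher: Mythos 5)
Your proof is correct and takes essentially the same route the paper intends: apply Lemma \ref{lemma:monotone_valid} to the monotone function $f^*$ of Lemma \ref{lemma:convert_to_monotone}, use $\rho^*_{q,i}(\mathbf{S})=\rho_{q,i}(\mathbf{S})-\xi_i^q$ and $\rho^*_{q,i}(\bfs{\emptyset})=\rho_{q,i}(\bfs{\emptyset})-\xi_i^q$, and translate back to $f$. Your case split of each $i\in X_q$ (outside $\bigcup_r S_r$, in $S_q$, or in $S_p$ for a unique $p\neq q$, valid by disjointness of the blocks of $\mathbf{S}$) correctly collapses the residual terms to $-\sum_{q}\sum_{i\in S_q\backslash X_q}\xi_i^q$, which is exactly the bookkeeping the paper leaves implicit.
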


With these properties of $k$-submodular functions, we propose valid linear inequalities for the hypograph of any $k$-submodular function in the next section. 

\section{$k$-submodular Inequalities} 
\label{sect:k_sub_ineq}
Let $f$ be a $k$-submodular function defined over $N$. Recall that $\mathcal{T}_f$ is the epigraph of $f$. In this section, we propose two classes of valid linear inequalities for $\mathcal{T}_f$ depending on whether $f$ is monotone. We refer to these inequalities as the \emph{$k$-submodular inequalities}.

\begin{proposition}
Let $f$ be a monotone $k$-submodular function. For a given $\mathbf{S}\in (k+1)^N$, the inequality
\begin{equation}
\label{cut_monotone}
\begin{aligned}
\eta \leq f(\mathbf{S})  & + \sum_{q=1}^k \sum_{i\notin\bigcup_{r=1}^k S_r} \rho_{q, i}(\mathbf{S}) x_i^q  +  \sum_{q=1}^k \sum_{p\in\{1,\dots, k\}\backslash \{q\}} \sum_{i\in S_p} \rho_{q,i}(\bfs{\emptyset}) x_i^q
\end{aligned}
\end{equation}
is valid for $\mathcal{T}_f$.
\end{proposition}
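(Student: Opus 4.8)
The plan is to reduce the validity of \eqref{cut_monotone} directly to the functional inequality of Lemma \ref{lemma:monotone_valid}, exploiting the fact that the points of $\mathcal{T}_f$ have binary $\mathbf{x}$-coordinates that act as indicators. First I would fix an arbitrary feasible point $(\mathbf{x}, \eta)\in\mathcal{T}_f$ and let $\mathbf{X}=(X_1,\dots,X_k)$ be the unique $k$-set whose incidence vector is $\mathbf{x}$; by the definition of $\mathcal{T}_f$ we have $\sum_{q=1}^k x_i^q \le 1$ for every $i\in N$ and, crucially, $\eta \le f(\mathbf{X})$. It then suffices to bound $f(\mathbf{X})$ by the right-hand side of \eqref{cut_monotone} evaluated at the incidence vector $\mathbf{x}$.

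The central step is to recognize that this right-hand side is exactly the right-hand side of Lemma \ref{lemma:monotone_valid}. Since $x_i^q=1$ precisely when $i\in X_q$ and $x_i^q=0$ otherwise, each product $\rho_{q,i}(\mathbf{S})x_i^q$ vanishes unless $i\in X_q$, so the sum $\sum_{q=1}^k\sum_{i\notin\bigcup_{r=1}^k S_r}\rho_{q,i}(\mathbf{S})x_i^q$ collapses to $\sum_{q=1}^k\sum_{i\in X_q\backslash\bigcup_{r=1}^k S_r}\rho_{q,i}(\mathbf{S})$, matching the first marginal term of the lemma; similarly, $\sum_{q=1}^k\sum_{p\neq q}\sum_{i\in S_p}\rho_{q,i}(\bfs{\emptyset})x_i^q$ collapses to $\sum_{q=1}^k\sum_{p\neq q}\sum_{i\in X_q\cap S_p}\rho_{q,i}(\bfs{\emptyset})$, matching the second term. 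Applying Lemma \ref{lemma:monotone_valid} with the chosen $\mathbf{S}$ and this $\mathbf{X}$ then yields that $f(\mathbf{X})$ is at most the evaluated right-hand side, and combining with $\eta\le f(\mathbf{X})$ gives \eqref{cut_monotone}.

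Because Lemma \ref{lemma:monotone_valid} already carries the entire analytic burden, the only genuine obstacle is careful bookkeeping, and I would verify that every marginal appearing in \eqref{cut_monotone} is well-defined. The restriction $i\notin\bigcup_{r=1}^k S_r$ in the first sum guarantees $i\notin\bigcup_{q'\neq q}S_{q'}$, so $\rho_{q,i}(\mathbf{S})$ is defined, while $\rho_{q,i}(\bfs{\emptyset})$ is always defined; the disjointness constraint $\sum_{q=1}^k x_i^q\le 1$ ensures that no element is double-counted across the orthants when the binary variables select the active terms. No additional appeal to monotonicity is needed beyond what is already embedded in Lemma \ref{lemma:monotone_valid}, so the argument amounts to a clean linearization of that inequality over the integer points of $\mathcal{T}_f$.
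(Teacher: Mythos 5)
Your proposal is correct and follows essentially the same route as the paper's proof: both fix $(\mathbf{x},\eta)\in\mathcal{T}_f$, chain $\eta\le f(\mathbf{X})$ with the set-function bound of Lemma \ref{lemma:monotone_valid}, and then use the indicator correspondence $x_i^q=1\iff i\in X_q$ to rewrite the set sums as the linear terms in \eqref{cut_monotone}. Your extra check that each marginal $\rho_{q,i}(\mathbf{S})$ is well-defined is a harmless addition the paper leaves implicit.
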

\begin{proof}
Consider any $(\mathbf{x},\eta)\in\mathcal{T}_f$. Recall that $\mathbf{x} = {[{x^1}, \dots, {x^k}]}^\top \in \mathbb{B}^{kn}$. Let $\mathbf{X}\in (k+1)^N$ be the $k$-set represented by $\mathbf{x}$. For any $\mathbf{S}\in (k+1)^N$, 
\begingroup
\allowdisplaybreaks
\begin{subequations}
\begin{align}
\eta \leq & f(\mathbf{X}) \label{eq:cut_monotone_a}\\
\leq & f(\mathbf{S}) + \sum_{q=1}^k \sum_{i\in X_q\backslash \bigcup_{r=1}^k S_r} \rho_{q, i}(\mathbf{S})  +  \sum_{q=1}^k \sum_{p\in\{1,\dots, k\}\backslash \{q\}} \sum_{i\in X_q\cap S_p} \rho_{q,i}(\bfs{\emptyset}) \label{eq:cut_monotone_b}\\
= & f(\mathbf{S}) + \sum_{q=1}^k \sum_{i\notin\bigcup_{r=1}^k S_r} \rho_{q, i}(\mathbf{S}) x_i^q  +  \sum_{q=1}^k \sum_{p\in\{1,\dots, k\}\backslash \{q\}} \sum_{i\in S_p} \rho_{q,i}(\bfs{\emptyset}) x_i^q \label{eq:cut_monotone_c}
\end{align}
\end{subequations}
\endgroup 
Inequality \eqref{eq:cut_monotone_a} follows from the definition of $\mathcal{T}_f$. Inequality \eqref{eq:cut_monotone_b} holds by Lemma \ref{lemma:monotone_valid}. Equation \eqref{eq:cut_monotone_c} uses the characteristic vector $\mathbf{x}$ to equivalently state the set relations. To see this, for every $q\in\{1,\dots, k\}$ and $i\in N$, $x_i^q = 1$ exactly when $i\in X_q$.  
\end{proof}

\begin{proposition}
\label{prop:cut_valid}
Let $f$ be any $k$-submodular function. For a given $\mathbf{S}\in (k+1)^N$, the inequality 
\begin{equation}
\label{cut_general}
\begin{aligned}
\eta \leq f(\mathbf{S})  & + \sum_{q=1}^k \sum_{i\notin \bigcup_{r=1}^k S_r} \rho_{q, i}(\mathbf{S})x_i^q  +  \sum_{q=1}^k \sum_{p\in\{1,\dots, k\}\backslash \{q\}} \sum_{i\in S_p} \rho_{q,i}(\bfs{\emptyset})x_i^q - \sum_{q=1}^k \sum_{i\in S_q} \xi_i^q (1-x_i^q)
\end{aligned}
\end{equation}
is valid for $\mathcal{T}_f$.
\end{proposition}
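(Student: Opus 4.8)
The plan is to mirror the three-step proof of the monotone inequality \eqref{cut_monotone} almost verbatim, substituting Corollary \ref{coro:not_monotone_valid} for Lemma \ref{lemma:monotone_valid}. First I would fix any $(\mathbf{x}, \eta) \in \mathcal{T}_f$ and let $\mathbf{X} \in (k+1)^N$ be the $k$-set encoded by $\mathbf{x}$, so that $x_i^q = 1$ precisely when $i \in X_q$. The definition of $\mathcal{T}_f$ gives $\eta \leq f(\mathbf{X})$, and applying Corollary \ref{coro:not_monotone_valid} to this $\mathbf{X}$ together with the given $\mathbf{S}$ bounds $f(\mathbf{X})$ above by the set-indexed right-hand side appearing in that corollary. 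It then remains only to verify that this set-indexed expression coincides with the right-hand side of \eqref{cut_general} once the set memberships are rewritten through the characteristic vector $\mathbf{x}$; transitivity of the resulting chain then yields the desired bound on $\eta$, that is, validity of \eqref{cut_general}.

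The second step is a purely bookkeeping linearization of the three index sets. In the first sum, restricting the index to $i \notin \bigcup_{r=1}^k S_r$ and inserting the factor $x_i^q$ selects exactly the indices of $X_q \backslash \bigcup_{r=1}^k S_r$, because $x_i^q = 1$ iff $i \in X_q$; here $\rho_{q,i}(\mathbf{S})$ is well-defined since $i \notin \bigcup_{r=1}^k S_r$ forces $i \notin \bigcup_{q' \neq q} S_{q'}$. In the second sum, ranging over $i \in S_p$ with the factor $x_i^q$ isolates $i \in X_q \cap S_p$. The only ingredient absent from the monotone case is the final term: for $i \in S_q$ the complemented indicator $(1 - x_i^q)$ equals $1$ exactly when $i \notin X_q$, so $\sum_{i \in S_q} \xi_i^q (1 - x_i^q)$ collapses to $\sum_{i \in S_q \backslash X_q} \xi_i^q$, matching the corresponding term of the corollary. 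Assembling these three translations reproduces \eqref{cut_general} term by term.

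I expect no real obstacle, since all of the analytic work has already been carried out in Corollary \ref{coro:not_monotone_valid} (which itself rests on Lemmas \ref{lemma:convert_to_monotone} and \ref{lemma:monotone_valid}); what is left is the linearization of set-indexed sums. The one place demanding care is the sign-and-indicator bookkeeping for the $\xi_i^q$ term, where the conversion uses $(1 - x_i^q)$ rather than $x_i^q$: one must confirm that this complemented indicator picks out $S_q \backslash X_q$ and not $S_q \cap X_q$. I would therefore state that correspondence explicitly, exactly as above, to rule out the only sign error that could plausibly arise.
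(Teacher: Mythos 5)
Your proposal is correct and follows essentially the same route as the paper's proof: fix $(\mathbf{x},\eta)\in\mathcal{T}_f$, chain $\eta\leq f(\mathbf{X})$ with the bound from Corollary~\ref{coro:not_monotone_valid} applied to $\mathbf{X}$ and $\mathbf{S}$, and then rewrite the set-indexed sums via the characteristic vector, including the $(1-x_i^q)$ indicator picking out $S_q\backslash X_q$. Your extra care with the linearization bookkeeping is sound but adds nothing beyond what the paper's argument already contains.
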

\begin{proof}
Consider any $(\mathbf{x},\eta)\in\mathcal{T}_f$. Let $\mathbf{X}\in (k+1)^N$ be the $k$-set represented by $\mathbf{x}$. For any $\mathbf{S}\in (k+1)^N$, 
\begingroup
\allowdisplaybreaks
\begin{subequations}
\begin{align}
\eta \leq & f(\mathbf{X}) \label{eq:cut_general_a}\\
\leq & f(\mathbf{S}) + \sum_{q=1}^k \sum_{i\in X_q\backslash \bigcup_{r=1}^k S_r} \rho_{q, i}(\mathbf{S})  +  \sum_{q=1}^k \sum_{p\in\{1,\dots, k\}\backslash \{q\}} \sum_{i\in X_q\cap S_p} \rho_{q,i}(\bfs{\emptyset}) - \sum_{q=1}^k \sum_{i\in S_q\backslash X_q} \xi_i^q \label{eq:cut_general_b}\\
= & f(\mathbf{S}) + \sum_{q=1}^k \sum_{i\notin \bigcup_{r=1}^k S_r} \rho_{q, i}(\mathbf{S})x_i^q  +  \sum_{q=1}^k \sum_{p\in\{1,\dots, k\}\backslash \{q\}} \sum_{i\in S_p} \rho_{q,i}(\bfs{\emptyset})x_i^q - \sum_{q=1}^k \sum_{i\in S_q} \xi_i^q (1-x_i^q) \label{eq:cut_general_c}
\end{align}
\end{subequations}
\endgroup 
Inequality \eqref{eq:cut_general_a} holds due to the definition of $\mathcal{T}_f$, and \eqref{eq:cut_general_b} follows from Corollary \ref{coro:not_monotone_valid}. Lastly, for every $q\in\{1,\dots, k\}$ and $i\in N$, $x_i^q = 1$ or equivalently $1-x_i^q = 0$, exactly when $i\in X_q$. This justifies equation \eqref{eq:cut_general_c}.  
\end{proof}

We call inequalities \eqref{cut_monotone} and \eqref{cut_general} $k$-submodular inequalities associated with $\mathbf{S}\in(k+1)^N$. Intuitively, the first summation term on the right-hand side of a $k$-submodular inequality represents the marginal contribution made by appending additional elements to $S_q$, $q\in\{1,\dots,k\}$. The second nested summation term gives the upper bounds for the change in functional value when some elements in $S_q$ are switched to $S_{q'}$ for any $q'\neq q$. When $k=2$, we call the proposed inequalities \emph{bisubmodular inequalities}. In the next remark, we show that our proposed inequalities subsume the submodular inequalities.

\begin{remark}
Notice that the submodular inequalities proposed by \citet{nemhauser1981maximizing} is a special case of the $k$-submodular inequalities when $k=1$. Let $g:2^N\rightarrow \mathbb{R}$ be a submodular function defined on $N$. We denote the hypograph of $g$ by 
\[\{(y,\eta_g)\in \mathbb{B}^n \times \mathbb{R} \mid \eta_g \leq g(y) \}.\] 
For any $j\in N$ and $S\subseteq N$, we let $\gamma_j= g(N) - g(N\backslash \{j\})$ and $\rho_j(S) = g(S\cup\{j\}) - g(S)$. 
The submodular inequality associated with $S$ is 
\[ \eta_g \leq g(S) + \sum_{i\notin S} \rho_i(S) y_i - \sum_{j\in S} \gamma_j (1-y_j), \]
which is exactly inequality \eqref{cut_general} when $k=1$. In this submodular inequality, the first summation accounts for marginal returns from appending items to $S$, and the second summation estimates the change in function $g$ if items are removed from $S$. {The natural extension of a submodular inequality to the $k$-submodular setting is an inequality that accounts for the change in function value by adding an unselected item to, or removing an item from each of the $k$ subsets. However, the resulting inequalities are usually invalid because the function value also changes by switching a selected item from one subset to another. The $k$-submodular inequalities account for this  complication. } 
\end{remark} 

Now let us consider the polyhedron 
\begingroup
\allowdisplaybreaks
\begin{align*}
 \mathcal{P}_f = \{ (\mathbf{x},\eta)\in\mathbb{R}^{kn+1}  :  \eta \leq  f(\mathbf{S}) &+ \sum_{q=1}^k \sum_{i\notin \bigcup_{r=1}^k S_r} \rho_{q, i}(\mathbf{S})x_i^q  +  \sum_{q=1}^k \sum_{p\in\{1,\dots, k\}\backslash \{q\}} \sum_{i\in S_p} \rho_{q,i}(\bfs{\emptyset})x_i^q \\
 & - \sum_{q=1}^k \sum_{i\in S_q} \xi_i^q (1-x_i^q),  \forall \mathbf{S}\in (k+1)^N, \sum_{q=1}^k x^q_i \leq 1, \forall i\in N \}. 
\end{align*}
\endgroup

\begin{theorem}
\label{alter_Tf_description}
Given any $k$-submodular (not necessarily monotone) function $f$ and any $(\mathbf{x}, \eta)\in \mathbb{B}^{kn} \times \mathbb{R}$, we have $(\mathbf{x}, \eta)\in \mathcal{P}_f$ if and only if $\eta \leq f(\mathbf{X})$, where $\mathbf{X}$ is the $k$-set represented by $\mathbf{x}$.  
\end{theorem}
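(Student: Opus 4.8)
The plan is to prove the two implications of the biconditional separately, relying on the validity result of Proposition \ref{prop:cut_valid} for the reverse direction and on a self-reduction argument for the forward direction. Throughout, $\mathbf{X}$ denotes the $k$-set encoded by the binary vector $\mathbf{x}$, and I will use repeatedly that the subsets of a $k$-set are pairwise disjoint, so that $x_i^q = 1$ for at most one index $q$.

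For the implication ``$\eta \leq f(\mathbf{X}) \Rightarrow (\mathbf{x},\eta)\in\mathcal{P}_f$,'' suppose $\eta \leq f(\mathbf{X})$. Since $\mathbf{X}\in(k+1)^N$, the vector $\mathbf{x}$ satisfies $\sum_{q=1}^k x_i^q \leq 1$ for every $i\in N$; together with $\eta \leq f(\mathbf{X}) = f(\mathbf{x})$ this places $(\mathbf{x},\eta)$ in the hypograph $\mathcal{T}_f$. Proposition \ref{prop:cut_valid} guarantees that inequality \eqref{cut_general} is valid for $\mathcal{T}_f$ for every $\mathbf{S}\in(k+1)^N$, so $(\mathbf{x},\eta)$ satisfies all of these inequalities as well as the disjointness constraints, which is exactly the defining system of $\mathcal{P}_f$. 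Hence $(\mathbf{x},\eta)\in\mathcal{P}_f$.

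For the converse, suppose $(\mathbf{x},\eta)\in\mathcal{P}_f$ with $\mathbf{x}$ binary. The key observation is that, among the exponentially many inequalities defining $\mathcal{P}_f$, the one indexed by the choice $\mathbf{S}=\mathbf{X}$ collapses to exactly $\eta \leq f(\mathbf{X})$. I would verify this by evaluating each of the three summation terms of \eqref{cut_general} at $\mathbf{S}=\mathbf{X}$. In the first sum the index ranges over $i\notin\bigcup_r X_r$, for which $x_i^q = 0$ for all $q$, so every term vanishes. In the second sum $i$ ranges over $S_p = X_p$ while the coefficient carries $x_i^q$ with $q\neq p$; disjointness forces $x_i^q = 0$ whenever $i\in X_p$ and $q\neq p$, so this sum is zero. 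In the third sum $i$ ranges over $S_q = X_q$, where $x_i^q = 1$ and thus $1 - x_i^q = 0$, killing every term. What remains is the constant $f(\mathbf{S}) = f(\mathbf{X})$, so this particular inequality reads $\eta \leq f(\mathbf{X})$, and since $(\mathbf{x},\eta)\in\mathcal{P}_f$ satisfies it, we conclude $\eta \leq f(\mathbf{X})$.

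The main obstacle, modest as it is, lies in the reduction step of the converse: one must carefully track how the binary encoding interacts with the set-membership indices, using the pairwise disjointness of the subsets of a $k$-set to see that each term on the right-hand side vanishes except the constant $f(\mathbf{X})$. Once this bookkeeping is confirmed, both implications follow immediately, and the theorem establishes that the integer points of $\mathcal{P}_f$ coincide precisely with $\mathcal{T}_f$, which is what legitimizes reformulating the $k$-submodular maximization problem as a mixed-integer linear program governed by the $k$-submodular inequalities.
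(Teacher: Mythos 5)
Your proof is correct and follows essentially the same route as the paper's: for one direction you specialize inequality \eqref{cut_general} to $\mathbf{S}=\mathbf{X}$ and check that every summation term vanishes by disjointness, and for the other you invoke the validity of the $k$-submodular inequalities for $\mathcal{T}_f$ from Proposition \ref{prop:cut_valid}. The only cosmetic difference is that you cite Proposition \ref{prop:cut_valid} as a black box where the paper re-traces the chain of inequalities through Corollary \ref{coro:not_monotone_valid}; the logical content is identical.
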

\begin{proof}
Suppose $(\mathbf{x}, \eta)\in \mathcal{P}_f$. Due to the second set of constraints in $\mathcal{P}_f$ and the fact that $\mathbf{x}\in\mathbb{B}^{kn}$, $x_i^q = 1$ exactly when $i\in X_q$ for any $i\in N$ and $q\in\{1,\dots,k\}$. In addition, $x_i^p = 0$ for all $p\in\{1,\dots,k\}\backslash \{q\}$. Therefore, 
\begingroup
\allowdisplaybreaks
\begin{align*}
\eta & \leq  f(\mathbf{X}) + \sum_{q=1}^k \sum_{i\notin \bigcup_{r=1}^k X_r} \rho_{q, i}(\mathbf{X})\cdot 0  +  \sum_{q=1}^k \sum_{p\in\{1,\dots, k\}\backslash \{q\}} \sum_{i\in X_p} \rho_{q,i}(\bfs{\emptyset})\cdot 0  - \sum_{q=1}^k \sum_{i\in X_q} \xi_i^q (1-1) \\
& = f(\mathbf{X}). 
\end{align*}
\endgroup
Conversely, suppose $\eta \leq f(\mathbf{X})$. Let $\mathbf{x}$ be the characteristic vector of the $k$-set $\mathbf{X}$. The second set of constraints in $\mathcal{P}_f$ trivially holds at $(\mathbf{x},\eta)$. For any $\mathbf{S}\in (k+1)^N$, 
\begingroup
\allowdisplaybreaks
\begin{align*}
\eta & \leq  f(\mathbf{X}) \\
& \leq f(\mathbf{S}) + \sum_{q=1}^k \sum_{i\in X_q\backslash \bigcup_{r=1}^k S_r} \rho_{q, i}(\mathbf{S})  +  \sum_{q=1}^k \sum_{p\in\{1,\dots, k\}\backslash \{q\}} \sum_{i\in X_q\cap S_p} \rho_{q,i}(\bfs{\emptyset}) - \sum_{q=1}^k \sum_{i\in S_q\backslash X_q} \xi_i^q \\
& = f(\mathbf{S}) + \sum_{q=1}^k \sum_{i\notin \bigcup_{r=1}^k S_r} \rho_{q, i}(\mathbf{S})x_i^q  +  \sum_{q=1}^k \sum_{p\in\{1,\dots, k\}\backslash \{q\}} \sum_{i\in S_p} \rho_{q,i}(\bfs{\emptyset})x_i^q  - \sum_{q=1}^k \sum_{i\in S_q} \xi_i^q (1-x_i^q), 
\end{align*}
\endgroup 
which follows from the argument in the proof of Proposition \ref{prop:cut_valid}. Thus $(\mathbf{x},\eta)$ satisfies the first set of constraints in $\mathcal{P}_f$ as well. We conclude that $(\mathbf{x},\eta)\in\mathcal{P}_f$. 
\end{proof}

\begin{corollary}
\label{cor:hypog}
Problem \eqref{eq:hypograph} is equivalent to
\begin{equation*}
\max\{\eta : (\mathbf{x},\eta)\in  \mathcal{P}_f \cap \mathbb{B}^{kn} \times \mathbb{R}, x\in\mathcal{K} \}.
\end{equation*}
\end{corollary}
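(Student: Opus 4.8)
The plan is to reduce the corollary to a direct consequence of Theorem \ref{alter_Tf_description} by showing that the feasible regions of the two optimization problems coincide. First I would observe that both problems maximize the same objective $\eta$, so it suffices to prove that their feasible sets are identical; the optimal values and optimizers then agree automatically.

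The key step is to verify the set identity $\mathcal{P}_f \cap (\mathbb{B}^{kn}\times\mathbb{R}) = \mathcal{T}_f$. Recall that $\mathcal{T}_f$ consists of the integer points $(\mathbf{x},\eta)$ with $\sum_{q=1}^k x_i^q \leq 1$ for all $i\in N$ and $\eta \leq f(\mathbf{x})$, and that the defining constraints of $\mathcal{P}_f$ already include the same packing constraints $\sum_{q=1}^k x_i^q \leq 1$. Hence for any integer point $(\mathbf{x},\eta)$ satisfying these packing constraints, $\mathbf{x}$ represents a well-defined $k$-set $\mathbf{X}$, and Theorem \ref{alter_Tf_description} tells us that membership in $\mathcal{P}_f$ is equivalent to the single inequality $\eta \leq f(\mathbf{X}) = f(\mathbf{x})$. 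This is exactly the remaining requirement for membership in $\mathcal{T}_f$, which yields the claimed equality.

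With the feasible regions identified, I would intersect both with the additional requirement $\mathbf{x}\in\mathcal{K}$, imposed identically in the two problems, and conclude that
\[\{(\mathbf{x},\eta) : (\mathbf{x},\eta)\in\mathcal{T}_f,\ \mathbf{x}\in\mathcal{K}\} = \{(\mathbf{x},\eta) : (\mathbf{x},\eta)\in \mathcal{P}_f\cap(\mathbb{B}^{kn}\times\mathbb{R}),\ \mathbf{x}\in\mathcal{K}\}.\]
Since problem \eqref{eq:hypograph} maximizes $\eta$ over the left-hand set and the stated mixed-integer program maximizes $\eta$ over the right-hand set, the two problems are equivalent.

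I do not anticipate a genuine obstacle here; the only point requiring care is the bookkeeping around the packing constraints $\sum_{q=1}^k x_i^q \leq 1$. Specifically, I must make sure these constraints are recognized as belonging to both $\mathcal{T}_f$ and $\mathcal{P}_f$, so that restricting to integer points of $\mathcal{P}_f$ automatically guarantees that $\mathbf{x}$ encodes a valid $k$-set and that the phrase ``$\mathbf{X}$ is the $k$-set represented by $\mathbf{x}$'' in Theorem \ref{alter_Tf_description} is meaningful. Once this is noted, the corollary is immediate.
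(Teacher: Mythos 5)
Your proof is correct and follows the same route as the paper, which simply notes that the corollary is a direct consequence of Theorem \ref{alter_Tf_description}; your write-up just makes explicit the set identity $\mathcal{P}_f \cap (\mathbb{B}^{kn}\times\mathbb{R}) = \mathcal{T}_f$ and the role of the packing constraints that the paper leaves implicit. No gaps.
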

\begin{proof}
This result directly follows from Theorem \ref{alter_Tf_description}. 
\end{proof}

\begin{remark}
It may be difficult to compute $\xi^q_i$, where $i\in N$ and $q\in\{1,\dots,k\}$, for non-monotone $k$-submodular functions in practice. However, we do not require exact $\xi^q_i$ values in the construction of the linear valid inequalities in our exact method. Proposition \ref{prop:cut_valid} still holds if we replace $\xi^q_i$ by its lower bound. One lower bound is $\zeta = \underline{f} - \overline{f}$, where $\underline{f}$ and $\overline{f}$ are a lower and an upper bound of $f$, respectively. This estimate can be improved depending on the problem context. Similarly, we can replace the $\xi^q_i$ values in $\mathcal{P}_f$ by their lower bounds that are cheaper to obtain. With the same proof, Theorem  \ref{alter_Tf_description} and Corollary \ref{cor:hypog} hold for the modified $\mathcal{P}_f$.  
\end{remark}

\section{A Cutting Plane Algorithm for $k$-submodular Maximization}
\label{sect:cut_plane}
We incorporate our proposed $k$-submodular inequalities in a cutting plane algorithm to tackle constrained $k$-submodular maximization problems in the form of \eqref{eq:original_max}, or equivalently \eqref{eq:hypograph}. Following the results in Section \ref{sect:k_sub_ineq}, problem \eqref{eq:hypograph} can be rewritten as 
\begin{subequations}
\label{eq:general_DCG_problem}
\begin{alignat}{2}
\max   &   \quad \eta  \\
\text{s.t.}   &   \quad (\mathbf{x}, \eta)\in \mathcal{C}, \label{constr:c} \\
  &   \quad \mathbf{x} \in \mathcal{K}.  \label{constr:k}
\end{alignat}
\end{subequations} 
The polyhedral set $\mathcal{C}$ in constraint \eqref{constr:c} is defined by the $k$-submodular inequalities, which provide a piecewise linear representation of the objective function $f$. The set $\mathcal{K}$ in constraint \eqref{constr:k} contains the characteristic vectors $\mathbf{x}$ that are associated with the feasible $k$-sets in $\mathcal{X}$. By abusing notation, $\mathcal{K}$ here also embeds the binary restriction $\mathbf{x}\in\mathbb{B}^{kn}$ and the constraints $\sum_{q=1}^k x^q_i \leq 1$, for all $i\in N$.\\

We propose Algorithm \ref{alg:DCG} to solve problem \eqref{eq:general_DCG_problem}. In this algorithm, we start with a relaxed set $\mathcal{C}$ and repeat the following subroutine until the optimality gap is within the given tolerance $\epsilon$. We solve a relaxed version of \eqref{eq:general_DCG_problem} to obtain $\overline{\mathbf{x}}$ and $\overline{\eta}$ { using a branch-and-bound algorithm}. The current solution $\overline{\eta}$ is an upper bound for the optimal objective, and $f(\overline{\mathbf{x}})$ serves as a lower bound. Let $\overline{\mathbf{X}}$ be the $k$-set that corresponds to $\overline{\mathbf{x}}$. If $\overline{\eta}$ overestimates $f(\overline{\mathbf{x}})$, then we restrict $\mathcal{C}$ by adding the $k$-submodular inequality \eqref{cut_general} associated with $\overline{\mathbf{X}}$. We repeat the same procedure in the next iteration.  \vspace{0.3cm} 

\begin{algorithm}[htb]
 \caption{Delayed Constraint Generation}
\label{alg:DCG}
\SetAlgoLined
\textbf{Input} initial $\mathcal{C}$, $\text{LB} = -\infty$, $\text{UB} = \infty$\;
 \While{ $(\text{UB}-\text{LB})/\text{UB}>\epsilon$ }{
  Solve problem  \eqref{eq:general_DCG_problem} {by a branch-and-bound algorithm} to get $(\overline{\mathbf{x}}, \overline{\eta})$\;
  \If{UB $> \overline{\eta}$}{
  $\text{UB}\leftarrow\overline{\eta}$\;
  }
     compute $f(\overline{\mathbf{x}})$\;
   \If{$\overline{\eta}>f(\overline{\mathbf{x}})$}{
     Add a $k$-submodular inequality \eqref{cut_general} associated with $\overline{\mathbf{x}}$ to $\mathcal{C}$\;
     }
     \If{$\text{LB}<f(\overline{\mathbf{x}})$}{
     $\text{LB}\leftarrow f(\overline{\mathbf{x}})$\;
    Update the incumbent solution to $\overline{\mathbf{x}}$ \;
     }
  }
  \textbf{Output} $\overline{\eta}$, $\overline{\mathbf{x}}$. 
\end{algorithm}

\begin{corollary}
Algorithm \ref{alg:DCG} converges to an optimal solution of Problem \eqref{eq:general_DCG_problem} in finitely many iterations. 
\end{corollary}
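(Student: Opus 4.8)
The plan is to combine three ingredients: the finiteness of the feasible integer set, the exactness of each generated $k$-submodular inequality at its own defining point, and a sandwich bounding argument that forces the optimality gap to close. First I would fix notation: let $z^*$ denote the optimal value of \eqref{eq:general_DCG_problem}, and for iteration $t$ let $(\overline{\mathbf{x}}^t, \overline{\eta}^t)$ be the solution returned by the branch-and-bound solver, with $\overline{\mathbf{X}}^t$ the corresponding $k$-set. Since $\mathcal{C}$ is always a relaxation obtained by imposing only a subset of the valid $k$-submodular inequalities (Proposition \ref{prop:cut_valid}), the relaxed feasible region contains the true region characterized in Corollary \ref{cor:hypog}; maximizing over a superset then yields $\overline{\eta}^t \ge z^*$. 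On the other hand $\overline{\mathbf{x}}^t \in \mathcal{K}$ corresponds to a genuinely feasible $k$-set, so $f(\overline{\mathbf{x}}^t) \le z^*$. Together these give the sandwich $f(\overline{\mathbf{x}}^t) \le z^* \le \overline{\eta}^t$ at every iteration, which certifies that $\overline{\eta}$ is a valid upper bound and $f(\overline{\mathbf{x}})$ a valid lower bound, exactly as the algorithm assumes.

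The crucial step is to show that the cut added in an iteration permanently resolves the integer point that generated it. When $\overline{\eta}^t > f(\overline{\mathbf{x}}^t)$, the algorithm appends inequality \eqref{cut_general} associated with $\mathbf{S} = \overline{\mathbf{X}}^t$. Evaluating this inequality at $\mathbf{x} = \overline{\mathbf{x}}^t$ -- precisely the computation in the forward direction of the proof of Theorem \ref{alter_Tf_description}, where every marginal term and every $\xi_i^q$ term is multiplied by $0$ and thus vanishes -- collapses its right-hand side to $f(\overline{\mathbf{X}}^t)$. Hence the new constraint reads $\eta \le f(\overline{\mathbf{x}}^t)$ at that point and is violated by $(\overline{\mathbf{x}}^t, \overline{\eta}^t)$. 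Consequently, if the same integer point $\overline{\mathbf{x}}^t$ were returned again at a later iteration $s$, the now-present cut would force $\overline{\eta}^s \le f(\overline{\mathbf{x}}^s)$, so no overestimation could occur there and the cut associated with $\overline{\mathbf{X}}^t$ would never be generated a second time. In particular, each $k$-set gives rise to at most one cut.

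I would then close the argument by finiteness. Because $\mathcal{K} \subseteq \mathbb{B}^{kn}$ is finite -- there are at most $(k+1)^n$ distinct $k$-sets -- the previous step bounds the total number of cuts ever generated by $(k+1)^n$. Next I would observe that any iteration which does \emph{not} add a cut is the last one: if $\overline{\eta}^t \le f(\overline{\mathbf{x}}^t)$, then the sandwich $f(\overline{\mathbf{x}}^t) \le z^* \le \overline{\eta}^t \le f(\overline{\mathbf{x}}^t)$ forces $\overline{\eta}^t = f(\overline{\mathbf{x}}^t) = z^*$, whereupon the updates set $\text{UB} = \text{LB} = z^*$ and the stopping test $(\text{UB}-\text{LB})/\text{UB} > \epsilon$ fails for any $\epsilon \ge 0$. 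Thus every non-terminal iteration appends a new cut, and since only finitely many distinct cuts exist, the while loop executes finitely many times and exits with $\overline{\eta} = f(\overline{\mathbf{x}}) = z^*$, i.e., with an optimal solution of \eqref{eq:general_DCG_problem}.

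The step I expect to be the main obstacle is the second one: making precise that the cut is \emph{tight}, equal to $f(\overline{\mathbf{X}}^t)$, exactly at its generating point, and that this tightness is what prevents the same integer solution from recurring with an inflated $\overline{\eta}$. This is the observation that rules out cycling and converts the mere finiteness of $\mathcal{K}$ into finiteness of the iteration count; the remaining bookkeeping with $\text{UB}$, $\text{LB}$, and the sandwich inequality is routine.
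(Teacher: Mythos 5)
Your proof is correct and follows essentially the same route as the paper, whose own proof is a one-line appeal to exactly your two ingredients: the finiteness of the set of feasible $k$-sets and Theorem \ref{alter_Tf_description} (which supplies the tightness of the cut \eqref{cut_general} at its generating binary point, ruling out repetition). Your write-up simply makes explicit the sandwich bound, the at-most-one-cut-per-$k$-set observation, and the termination bookkeeping that the paper leaves implicit.
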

\begin{proof}
This result follows from the fact that the number of feasible solutions is finite and from Theorem \ref{alter_Tf_description}. 
\end{proof}

\section{Numerical Study}
\label{sect:comp}
In this numerical study, we demonstrate the effectiveness of our proposed Delayed Constraint Generation (DCG) Algorithm \ref{alg:DCG} by solving constrained $k$-submodular maximization problems {with $k=2$ and 3}. Specifically, we  run computational experiments on the {multi-type sensor placement problem}, described in Section \ref{subsect:sensor}. We refer the readers to Example 5.1 in \citep{yu2020polyhedral} for a small numerical example. To summarize, let a set $N$ of $n$ potential sensor deployment locations, and $t$ pairs of measurements made by all types of sensors at each location be given. Our goal is to determine a {multi-type sensor placement plan $\bfs{S}\in (k+1)^N$, subject to cardinality constraints $|S_q|\leq B_q$ for $q\in\{1,2,\dots,k\}$}, such that the entropy is maximized. Since the entropy function is highly nonlinear, we cannot formulate the {multi-type sensor placement problem} as a compact mixed-integer linear program. Therefore, we compare our DCG algorithm against the exhaustive search (ES) method, which is the only available benchmark. \\

Using the DCG approach, we formulate the {multi-type sensor placement problem} as  
\begingroup
\allowdisplaybreaks
 \begin{subequations}
\begin{alignat}{2}
\max \hspace{0.2cm}   &   \eta  \\
\textrm{s.t.}\hspace{0.2cm}    &   (\bfs{x}, \eta) \in \mathcal{C}, \label{subeq:ent_linearapprox}\\
  &   \sum_{q=1}^k x^q_i \leq 1,   &    &  \quad \text{ for all } i \in N, \label{subeq:ent_disjoint}\\
  &  \sum_{i\in N} x^q_i \leq B_q,   &    &  \quad \text{ for all } q\in\{1,2,\dots,k\}, \label{subeq:ent_card} \\
  &   x^q_i \in \mathbb{B},   &    &  \quad  \text{ for all } i\in N, q\in\{1,2,\dots,k\}. \label{subeq:binary}
\end{alignat}
\end{subequations}
\endgroup
 
The variables $x^q$ and $\eta$ are consistent with the notation in \eqref{eq:general_DCG_problem}. Constraint \eqref{subeq:ent_linearapprox} gives the piecewise linear representation of the entropy function by exploiting its $k$-submodularity. The inequalities \eqref{subeq:ent_card} ensure that the cardinality requirements are satisfied.  \\

We create random problem instances using the Intel Berkeley research lab dataset \citep{bodik2004intel}. {This dataset includes the sensor readings of three environmental factors---light, temperature, and humidity---at 54 locations in the Intel Berkeley Research lab from February 28th to April 5th in 2004.} We discretize the temperature data into three equal-width bins. Both light and humidity data are discretized into two equal-width bins. For the set of experiments with $k=2$, we aim to find the best placement plan for light and temperature sensors. {When $k=3$, our goal is to determine the optimal placement plan for light, temperature and humidity sensors.} The experiments are executed on two threads of a Linux server with Intel Haswell E5-2680 processor at 2.5GHz and 128GB of RAM. Our algorithms are implemented in Python 3.6 and Gurobi Optimizer 7.5.1 with default settings and one-hour time limit for each instance.   \\

First, we explore how the changes in the number of deployable locations, $n$, affect the computational performance of the DCG algorithm in both sets of experiments with $k=2$ {and $3$}.  We randomly select $n\in \{20, 30, 40, 50\}$ out of the 54 locations in the dataset. At each of the $n$ locations, we randomly select $t\in\{50, 100, 150, 200\}$ tuples of light, temperature and humidity measurements for evaluating the entropy. We set $B_q = \floor{n/10}$ for $q\in\{1,\dots, k\}$, so that the cardinality bound for each type of sensors increases proportionally with $n$. The computational results are summarized in Table \ref{res:entropy} for $k=2$ {and Table \ref{res:entropy_k3} for $k=3$}. The first two columns in these tables list the numbers of deployable sensor locations and the numbers of observations at each location. Columns 3-5 present the relevant computational statistics, namely the running time in seconds, the number of $k$-submodular inequalities added, and the number of branch-and-bound nodes visited when solving the relaxed master problems. The end optimality gap is computed by (UB$-$LB)/UB, where UB and LB are the best upper and lower bounds on the objective respectively. The last column reports the runtime of ES. At the time limit, ES does not provide end gap information because it produces no lower bounds and has to essentially go through each feasible solution to prove optimality. \\

\begin{table}[htb]
\small
\begin{center}
\captionsetup{font=footnotesize}
\caption{Computational performance of DCG and exhaustive search in the coupled sensor placement problem. The statistics are averaged across 3 trials. The superscript $^{\ell}$ means that out of the three trials, $\ell$ instances reach the time limit of one hour.} \label{res:entropy}
\begin{tabular}{c|c||c|c|c||c}
\hline
  $n$  &  $t$  &     time (s)   &     \# cuts  &     \# nodes &  ES time (s)  \\ 
  \hline
\multirow{4}{*}{20}	&	50	&	0.73	&	49.67	&	53.00	&	9.01 \\
	&	100	&	1.09	&	38.67	&	42.33	&	17.68 \\
	&	150	&	1.76	&	51.67	&	54.00	&	21.89 \\
	&	200	&	0.97	&	16.33	&	20.00	&	34.07 \\
\hline
\multirow{4}{*}{30}	&	50	&	7.67	&	285.33	&	289.67	&	--$^3$ \\
	&	100	&	13.04	&	250.33	&	253.67	&	--$^3$ \\
	&	150	&	15.07	&	224.67	&	230.00	&	--$^3$ \\
	&	200	&	23.70	&	241.67	&	245.00	&	--$^3$ \\
\hline
\multirow{4}{*}{40}	&	50	&	36.05	&	810.33	&	814.67	&	--$^3$ \\
	&	100	&	161.80	&	1783.33	&	1791.00	&	--$^3$ \\
	&	150	&	145.67	&	1255.33	&	1261.00	&	--$^3$ \\
	&	200	&	283.81	&	1676.67	&	1685.00	&	--$^3$ \\
\hline
\multirow{4}{*}{50}	&	50	&	104.21	&	1549.00	&	1560.00	&	--$^3$ \\
	&	100	&	478.09	&	3559.33	&	3566.33	&	--$^3$ \\
	&	150	&	1372.18	&	6911.67	&	6920.67	&	--$^3$ \\
	&	200	&	2474.35	&	9268.67	&	9275.67	&	--$^3$ \\
  \hline
\end{tabular}
\end{center}
\end{table}

\begin{table}[htb]
\small
\begin{center}
\captionsetup{font=footnotesize}
\caption{Computational performance of DCG and exhaustive search in the sensor placement problem with three types of sensors. The statistics are averaged across 3 trials. The superscript $^{\ell}$ means that out of the three trials, $\ell$ instances reach the time limit of one hour.} \label{res:entropy_k3}
\begin{tabular}{c|c||c|c|c|c||c}
\hline
  $n$  &  $t$  &     time (s)   &     \# cuts  &     \# nodes &  end gap & ES time (s)  \\ 
  \hline
 \multirow{4}{*}{20}	&	50	&	0.90	&	33.33	&	35.33	&	--	&	1633.00 \\
	&	100	&	1.58	&	32.67	&	35.33	&	--	&	2953.18 \\
	&	150	&	1.29	&	18.00	&	21.33	&	--	&	--$^3$ \\
	&	200	&	3.68	&	36.67	&	40.00	&	--	&	--$^3$ \\
\hline
\multirow{4}{*}{30}	&	50	&	7.54	&	142.67	&	147.33	&	--	&	--$^3$ \\
	&	100	&	20.27	&	222.33	&	226.67	&	--	&	--$^3$ \\
	&	150	&	22.48	&	156.67	&	160.33	&	--	&	--$^3$ \\
	&	200	&	46.55	&	243.33	&	248.33	&	--	&	--$^3$ \\
\hline
\multirow{4}{*}{40}	&	50	&	82.10	&	947.33	&	952.67	&	--	&	--$^3$ \\
	&	100	&	164.97	&	1043.00	&	1049.33	&	--	&	--$^3$ \\
	&	150	&	395.54	&	1599.67	&	1606.33	&	--	&	--$^3$ \\
	&	200	&	640.38	&	1902.67	&	1912.00	&	--	&	--$^3$ \\
\hline
\multirow{4}{*}{50}	&	50	&	253.29	&	1878.33	&	1886.00	&	--	&	--$^3$ \\
	&	100	&	1779.49	&	6907.67	&	6920.00	&	--	&	--$^3$ \\
	&	150	&	2719.26	&	6640.00	&	6646.33	&	--	&	--$^3$ \\
	&	200	&	--$^3$	&	6496.00	&	6500.67	&	6.10\%	&	--$^3$ \\
\hline
\end{tabular}
\end{center}
\end{table}

In this set of experiments, DCG solves all the instances when $k=2$ {and solves all but one test case when $k=3$, within the one hour time limit. The test case that DCG fails to solve attains a small end gap of 6.1\%. Based on the runtime differences, the instances with $k=3$ are in general more challenging than those with $k=2$, when the other parameters are kept the same. Overall, the computational statistics for $k=2$ and $k=3$ display the same trend.} The runtime, the number of branch-and-bound nodes as well as the number of $k$-submodular inequalities added increase as $n$ increases.  Variations in $t$ for small $n$ values do not significantly impact the computational statistics. When $n=50$, all the statistics increase at a greater rate in response to increments in $t$ compared with the case of $n=20$. On the other hand, unsurprisingly, ES struggles for $n\geq 30$ when $k=2$ {and $3$. In the test cases with $n=20$ that ES solves, the computing time drastically increases as $k$ goes from 2 to 3, reflecting the exponential growth of the decision space.} For $n=20,30$ and $40$, all instances are solved by DCG under 11 minutes; while ES hits the time limit for $n\ge 30$. In fact, when $k=2$, $n=50$ and $t=100$, exhaustive search needs to enumerate $50!/(5!5!40!) \approx 2.59\times 10^{12}$ feasible bisets to find an exact optimal solution. We find that objective function evaluation alone takes $1.6\times 10^{-4}$ seconds on average when $t=100$ for each biset. Thus, the total function evaluation time is equivalent to 13.13 years. In contrast, our algorithm finds an optimal solution in 8 minutes.\\

Next, we explore the effects of the cardinality bounds $B_q$, $q\in \{1,\dots, k\}$, on the computational performance of DCG. We consider all the placeable sensor locations; that is, $n=54$. Again, at each location, we randomly select $t\in\{50, 100, 150, 200\}$ $k$-tuples of sensor readings. We set $B_q = B$ for $q\in \{1,\dots, k\}$, where $B$ is an integer between 1 and 5. The computational results are summarized in Table \ref{res:entropy_v2} for $k=2$ {and Table \ref{res:entropy_k3_v2} for $k=3$}. In either table, the first column shows the upper bounds on the number of each type of sensors. The second column lists the numbers of observations at each of the 54 locations for entropy evaluations. The next four columns are the relevant computational statistics, including the runtime in seconds, the number of $k$-submodular inequalities added, the number of branch-and-bound nodes visited and the end optimality gaps. The last column reports the running time of ES in seconds. \\

\begin{table}[htb]
\small
\begin{center}
\captionsetup{font=footnotesize}
\caption{Computational performance of DCG and exhaustive search in the coupled sensor placement problem. The statistics are averaged across 3 trials. The superscript $^{\ell}$ means that out of the three trials, $\ell$ instances reach the time limit of one hour. }
\begin{tabular}{c|c||c|c|c|c||c}
\hline
  $B$  &  $t$  &     time (s)   &     \# cuts  &     \# nodes  & end gap &  { ES time (s)}  \\ 
  \hline
\multirow{4}{*}{1} 	&	50	&	1.09	&	38.00	&	41.00	&	--	&	0.48 \\
	&	100	&	1.16	&	18.33	&	21.33	&	--	&	1.04 \\
	&	150	&	1.45	&	16.67	&	20.00	&	--	&	1.33 \\
	&	200	&	1.97	&	14.67	&	17.33	&	--	&	1.98 \\
\hline
\multirow{4}{*}{2} 	&	50	&	11.03	&	283.33	&	286.67	&	--	&	546.64 \\
	&	100	&	27.05	&	333.33	&	337.67	&	--	&	1158.15 \\
	&	150	&	22.14	&	209.00	&	212.33	&	--	&	1596.22 \\
	&	200	&	47.81	&	283.33	&	286.00	&	--	&	2307.06 \\
\hline
\multirow{4}{*}{3} 	&	50	&	31.90	&	669.33	&	674.33	&	--	&	--$^3$ \\
	&	100	&	92.53	&	939.00	&	943.33	&	--	&	--$^3$ \\
	&	150	&	149.39	&	997.00	&	1000.00	&	--	&	--$^3$ \\
	&	200	&	335.15	&	1535.67	&	1541.33	&	--	&	--$^3$ \\
\hline
\multirow{4}{*}{4} 	&	50	&	67.92	&	1106.67	&	1114.33	&	--	&	--$^3$ \\
	&	100	&	404.21	&	3422.33	&	3428.33	&	--	&	--$^3$ \\
	&	150	&	754.14	&	3934.00	&	3940.00	&	--	&	--$^3$ \\
	&	200	&	1308.30	&	5154.00	&	5159.33	&	--	&	--$^3$ \\
\hline
\multirow{4}{*}{5} 	&	50	&	149.37	&	1773.67	&	1782.33	&	--	&	--$^3$ \\
	&	100	&	636.18	&	4467.33	&	4473.00	&	--	&	--$^3$ \\
	&	150	&	2068.81	&	8792.00	&	8800.33	&	--	&	--$^3$ \\
	&	200	&	2941.30$^1$	&	11385.00	&	11394.33	&	2.78\%	&	--$^3$ \\
     \hline
\end{tabular}
\label{res:entropy_v2}
\end{center}
\end{table}

\begin{table}[htb]
\small
\begin{center}
\captionsetup{font=footnotesize}
\caption{Computational performance of DCG and exhaustive search in the sensor placement problem with three types of sensors. The statistics are averaged across 3 trials. The superscript $^{\ell}$ means that out of the three trials, $\ell$ instances reach the time limit of one hour. }
\begin{tabular}{c|c||c|c|c|c||c}
\hline
  $B$  &  $t$  &     time (s)   &     \# cuts  &     \# nodes  &  end gap & ES time (s) \\ 
  \hline
\multirow{4}{*}{1}	&	50	&	1.14	&	19.33	&	22.00	&	--	&	37.56 \\
	&	100	&	3.05	&	31.00	&	33.67	&	--	&	64.11 \\
	&	150	&	4.80	&	34.00	&	36.67	&	--	&	94.67 \\
	&	200	&	5.33	&	26.00	&	28.00	&	--	&	134.80 \\
\hline
\multirow{4}{*}{2}	&	50	&	22.63	&	267.33	&	271.00	&	--	&	--$^3$ \\
	&	100	&	41.45	&	294.33	&	297.00	&	--	&	--$^3$ \\
	&	150	&	50.65	&	246.00	&	248.67	&	--	&	--$^3$ \\
	&	200	&	88.52	&	308.67	&	311.67	&	--	&	--$^3$ \\
\hline
\multirow{4}{*}{3}	&	50	&	103.74	&	923.00	&	927.67	&	--	&	--$^3$ \\
	&	100	&	184.81	&	1405.00	&	1410.67	&	--	&	--$^3$ \\
	&	150	&	355.86	&	1315.33	&	1319.67	&	--	&	--$^3$ \\
	&	200	&	524.26	&	1420.67	&	1424.00	&	--	&	--$^3$ \\
\hline
\multirow{4}{*}{4}	&	50	&	215.59	&	1624.00	&	1630.67	&	--	&	--$^3$ \\
	&	100	&	787.15	&	3315.00	&	3320.67	&	--	&	--$^3$ \\
	&	150	&	1618.03	&	4851.67	&	4857.00	&	--	&	--$^3$ \\
	&	200	&	2467.97	&	5447.00	&	5453.00	&	--	&	--$^3$ \\
\hline
\multirow{4}{*}{5}	&	50	&	386.29	&	2330.33	&	2339.67	&	--	&	--$^3$ \\
	&	100	&	1449.97	&	5362.33	&	5372.67	&	--	&	--$^3$ \\
	&	150	&	2871.68$^1$	&	8067.00	&	8073.00	&	4.34\%	&	--$^3$ \\
	&	200	&	--$^3$	&	7037.33	&	7044.00	&	6.26\%	&	--$^3$ \\	
     \hline
\end{tabular}
\label{res:entropy_k3_v2}
\end{center}
\end{table}

In both Tables \ref{res:entropy_v2} {and  \ref{res:entropy_k3_v2}}, higher cardinality bounds $B$ make the multi-type sensor placement problem more challenging, with longer running time, more cuts  added, and more branch-and-bound nodes  visited. In particular, when $B \geq 4$, the computational statistics increase at a higher rate as $t$ increases than that with $B\leq 3$. These trends  are true for both $k=2$ {and $k=3$}, since the decision space consisting of all the plausible deployment plans grows rapidly as more sensors are allowed. If, in addition, the number of observations is high, then each entropy evaluation becomes expensive, resulting in a significant increase in the running time. When $k=2$, even in the most challenging instances where $B=5$ and $t=200$, DCG solves two test instances within one hour, and obtains a feasible solution within 3\% optimality in the third trial. {Similarly, when $k=3$, DCG attains small optimality gap under 6.26\% for the most challenging test case with $B=5$ and $t=200$.} When the cardinality bounds are below three, DCG solves all the instances within six minutes for $k=2$ {and nine minutes for $k=3$}. On the contrary, ES fails due to the time limit for all the instances with $B\geq 3$ and $k=2$. {ES struggles more when $k=3$ and fails as soon as $B$ exceeds 1.} \\

\section{Concluding Remarks} 
\label{sect:conclusion}
In this paper, we propose a polyhedral approach to solve constrained maximization problems with $k$-submodular objective functions. We propose valid linear inequalities, referred to as $k$-submodular inequalities, for the hypograph of any $k$-submodular function. This development leads us to construct the first exact method---a delayed constraint generation algorithm based on $k$-submodular inequalities---to solve general $k$-submodular maximization problems other than the trivially available exhaustive search method. Our numerical experiments on a highly nonlinear multi-type sensor placement problem show that the proposed delayed constraint generation algorithm is effective when handling challenging $k$-submodular maximization problems that are unsolvable by existing  methods. 

\section*{Acknowledgements}
We thank the editor and the reviewers for providing comments that improved this paper. This research is supported, in part, by NSF grant 2007814. This research is also supported in part through the computational resources and staff contributions provided for the Quest high performance computing facility at Northwestern University, which is jointly supported by the Office of the Provost, the Office for Research, and Northwestern University Information Technology. 
\bibliography{max_bib}{}
\bibliographystyle{apalike}

\end{document}